\newcommand{\C}{\mathbb{C}}
\newcommand{\ZZ}{\mathbb{Z}}
\newcommand{\QQ}{\mathbb{Q}}
\newcommand{\NN}{\mathbb{N}}
\newcommand{\PP}{\mathbb{P}}
\newcommand{\OO}{\mathcal O}
\newcommand{\Ss}{\mathcal S}
\newcommand{\MM}{\mathcal M}
\newcommand{\pic}{\hbox{Pic}}
\newcommand{\rom}{\romannumeral}
\newcommand*{\da@rightarrow}{\mathchar"0\hexnumber@\symAMSa 4B }
\newcommand*{\da@leftarrow}{\mathchar"0\hexnumber@\symAMSa 4C }
\newcommand*{\xdashrightarrow}[2][]{%
  \mathrel{%
    \mathpalette{\da@xarrow{#1}{#2}{}\da@rightarrow{\,}{}}{}%
  }%
}
\newcommand{\xdashleftarrow}[2][]{%
  \mathrel{%
    \mathpalette{\da@xarrow{#1}{#2}\da@leftarrow{}{}{\,}}{}%
  }%
}
\newcommand*{\da@xarrow}[7]{%
  \sbox0{$\ifx#7\scriptstyle\scriptscriptstyle\else\scriptstyle\fi#5#1#6\m@th$}%
  \sbox2{$\ifx#7\scriptstyle\scriptscriptstyle\else\scriptstyle\fi#5#2#6\m@th$}%
  \sbox4{$#7\dabar@\m@th$}%
  \dimen@=\wd0 %
  \ifdim\wd2 >\dimen@
    \dimen@=\wd2 %
  \fi
  \count@=2 %
  \def\da@bars{\dabar@\dabar@}%
  \@whiledim\count@\wd4<\dimen@\do{%
    \advance\count@\@ne
    \expandafter\def\expandafter\da@bars\expandafter{%
      \da@bars
      \dabar@ 
    }%
  }%
  \mathrel{#3}%
  \mathrel{%
    \mathop{\da@bars}\limits
    \ifx\\#1\\%
    \else
      _{\copy0}%
    \fi
    \ifx\\#2\\%
    \else
      ^{\copy2}%
    \fi
  }%
  \mathrel{#4}%
}
\DeclareMathOperator{\aut}{Aut}
\DeclareMathOperator{\ide}{id}
\DeclareMathOperator{\ima}{Im}
\newtheorem{theorem}{Theorem}[section]
\newtheorem{claim}[theorem]{Claim}
\newtheorem{lemma}[theorem]{Lemma}
\newtheorem{corollary}[theorem]{Corollary}
\newtheorem{proposition}[theorem]{Proposition}
\newtheorem{remark}[theorem]{Remark}
\newtheorem{definition}[theorem]{Definition}
\newtheorem{convention}{Conventions}
\newtheorem{notation}[theorem]{Notation}
\newtheorem{nonumbering}{Theorem}
\newtheorem{nonumberingt}{Acknowledgements}
\begin{document}
\author[Robert Laterveer]
{Robert Laterveer}

\address{Institut de Recherche Math\'ematique Avanc\'ee,
CNRS -- Universit\'e 
de Strasbourg,\
7 Rue Ren\'e Des\-car\-tes, 67084 Strasbourg CEDEX,
FRANCE.}
\email{robert.laterveer@math.unistra.fr}

\title{Algebraic cycles and special Horikawa surfaces}

\begin{abstract} This note is about a certain $16$-dimensional family of surfaces of general type with $p_g=2$ and $q=0$ and $K^2=1$, called ``special Horikawa surfaces''. These surfaces, studied by Pearlstein--Zhang and by Garbagnati, are related to K3 surfaces. We show that special Horikawa surfaces have a multiplicative Chow--K\"unneth decomposition, in the sense of Shen--Vial. As a consequence, the Chow ring of special Horikawa surfaces displays K3-like behaviour.
 \end{abstract}

\keywords{Algebraic cycles, Chow group, motive, Bloch--Beilinson filtration, surface of general type, K3 surface, Beauville's ``splitting property'' conjecture, multiplicative Chow--K\"unneth decomposition}
\subjclass[2010]{Primary 14C15, 14C25, 14C30, 14J28, 14J29.}

\maketitle

\section{Introduction}

Horikawa surfaces are minimal complex surfaces of general type verifying either
  \[ K^2=2p_g-4\ \ \ \hbox{and\ $K^2$\ is\ even}\ ,  \]
  or 
   \[ K^2=2p_g-3\ \ \ \hbox{and\ $K^2$\ is\ odd}\  \]
   (i.e., Horikawa surfaces lie on, or immediately below, the Noether line) \cite{BPV}, \cite{Hor}.
   
  Pearlstein--Zhang \cite{PZ} and Garbagnati \cite{Gar} have studied so-called {\em special Horikawa surfaces\/}; by definition, these are Horikawa surfaces with $K^2=1$ and $p_g=2$ obtained as
  bidouble covers of $\PP^2$ branched along a quintic and two lines.
  From the cohomological viewpoint, a special Horikawa surface $S$ looks like a ``K3 Big Mac''. That is, there exist two K3 surfaces $X_1, X_2$ and an isomorphism
    \begin{equation}\label{incoh} H^2_{tr}(S,\QQ)\cong H^2_{tr}(X_1,\QQ)\oplus H^2_{tr}(X_2,\QQ)\ .\end{equation} 
    (Here, the transcendental cohomology $H^2_{tr}()$ is defined as the orthogonal complement of the N\'eron--Severi group with respect to the cup product.)
    
    The Bloch--Beilinson--Murre conjectures \cite{J2}, \cite{Vo}, \cite{MNP}, \cite{Mur} make the oracular prediction that the Big Mac relation (\ref{incoh}) should also hold on the level of the Chow group of $0$-cycles. The first (easy) result of this note confirms that this is indeed the case:
    
    \begin{nonumbering}[=Theorem \ref{main}] Let $S$ be a special Horikawa surface, and let $X_1, X_2$ be the associated K3 surfaces. There is an isomorphism
     \[ A^2_{hom}(S)\cong A^2_{hom}(X_1)\oplus A^2_{hom}(X_2)\ .\]
     \end{nonumbering}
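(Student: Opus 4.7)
The plan is to exploit the bidouble cover structure $\pi \colon S \to \PP^2$, with Galois group $G = \langle \sigma_1, \sigma_2 \rangle \cong (\ZZ/2)^2$ and total branch locus $Q + L_1 + L_2$. Setting $\sigma_3 := \sigma_1\sigma_2$, the three intermediate double covers $Y_i := S/\langle \sigma_i\rangle \to \PP^2$ are, by the standard calculus of bidouble covers, branched along $D_j + D_k$ where $\{i,j,k\}=\{1,2,3\}$ and $(D_1,D_2,D_3) = (Q,L_1,L_2)$. Hence two of the $Y_i$'s are double covers of $\PP^2$ branched along sextic curves, so are (singular models of) the two associated K3 surfaces $X_1, X_2$; the third is a double cover branched along a degree $2$ curve, hence rational.

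The main step is a $G$-isotypic decomposition of $A^2_{hom}(S)_\QQ$. Using the orthogonal projectors
\[
p_\chi \;:=\; \tfrac{1}{4}\sum_{g \in G}\chi(g)^{-1} g^{\ast}\qquad(\chi \in \hat G),
\]
one obtains
\[
A^2_{hom}(S)_\QQ \;=\; \bigoplus_{\chi \in \hat G} A^2_{hom}(S)_\QQ^{\chi}.
\]
The trivial component equals the $G$-invariant part, which via $\pi$ is $\pi^{\ast} A^2_{hom}(\PP^2)_\QQ = 0$. For each non-trivial $\chi$ there is a unique $j$ with $\chi(\sigma_j)=1$. The $\sigma_j$-invariant subspace of $A^2_{hom}(S)_\QQ$ equals the pullback $\pi_j^{\ast} A^2_{hom}(Y_j)_\QQ$ (where $\pi_j \colon S \to Y_j$ is the quotient by $\sigma_j$), and further decomposes as $A^2_{hom}(S)_\QQ^{G} \oplus A^2_{hom}(S)_\QQ^{\chi}$. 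Vanishing of the first summand gives
\[
A^2_{hom}(S)_\QQ^{\chi}\;\cong\;A^2_{hom}(Y_j)_\QQ.
\]
Summing the three resulting identifications, invoking the trivial Bloch conjecture ($A^2_{hom} = 0$) for the rational $Y_i$, and identifying the other two $Y_i$'s with $X_1$ and $X_2$ yields the stated isomorphism.

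The expected obstacle is purely geometric: the naive quotient varieties $Y_i$ acquire singularities at the common points of the three branch divisors, so one must either first blow up $\PP^2$ to separate the $D_i$'s (working with the induced smooth bidouble cover and lifting the $G$-decomposition to a smooth model of $S$), or invoke the explicit geometric descriptions already available from Pearlstein--Zhang \cite{PZ} and Garbagnati \cite{Gar} to identify the resolutions of $Y_2,Y_3$ with $X_1,X_2$. Once this bookkeeping is done, the character decomposition above is routine, since everything after the bidouble cover identification is pure character theory for a group of order $4$ acting on a $\QQ$-vector space.
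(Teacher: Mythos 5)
Your proposal is correct and is essentially the paper's own argument: the projectors $\tfrac14(\Delta_S\pm\Gamma_{\sigma_1})\circ(\Delta_S\pm\Gamma_{\sigma_2})$ used in the paper are exactly your character projectors $p_\chi$, the $(+,+)$ piece is killed via $\PP^2$, the two mixed pieces are identified with the (singular models of the) K3 surfaces, and the $(-,-)$ piece with the rational double cover branched along $L_1+L_2$. The only cosmetic difference is that the paper packages the decomposition at the level of Chow motives (handling the singular quotients via Fulton's theory of projective quotient varieties) and deduces the $A^2_{hom}$ statement, whereas you work directly on $A^2_{hom}$.
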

     
     (Here, $A^2_{hom}()$ denotes the Chow group of degree $0$ $0$-cycles with $\QQ$-coefficients modulo rational equivalence.)
     
      The relation of Theorem \ref{main} also holds on the level of Chow motives. This gives some new examples of surfaces of general type with 
      finite-dimensional motive, in the sense of Kimura \cite{Kim} (cf. Corollary \ref{cor1}).
     
%
      
The second (more interesting) result of this note concerns the ring structure of the Chow ring, given by intersection product.
We show that for special Horikawa surfaces, the Chow ring behaves just like that of K3 surfaces:
   
   \begin{nonumbering}[=Theorem \ref{th:mck}] Let $S$ be a special Horikawa surface. Then $S$ has a multiplicative Chow--K\"unneth decomposition (in the sense of Shen--Vial \cite{SV}). In particular, all intersections of divisors are proportional in the Chow group of 0-cycles:
     \[   \ima \bigl( A^1_{}(S)\otimes A^1_{}(S)\ \to\ A^2(S)\bigr) \stackrel{}{=}\QQ[c_2(S)]\ .\]
     (Here $c_2(S)$ denotes the Chow-theoretic second Chern class of the tangent bundle.)
    \end{nonumbering}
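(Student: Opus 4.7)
The strategy is to promote the K3 ``Big Mac'' relation~(\ref{incoh}) to a splitting of Chow motives, and then to transfer the Beauville--Voisin multiplicative Chow--K\"unneth decomposition (MCK) from the K3 summands to $S$. Let $G = (\ZZ/2)^2$ be the Galois group of $f\colon S \to \PP^2$, with non-trivial involutions $\sigma_1, \sigma_2, \sigma_3$. Each intermediate quotient $Y_i := S/\langle\sigma_i\rangle$, after resolution, is a double cover of $\PP^2$ branched on one of the three pairwise sums of the quintic $Q$ and the two lines $L_1, L_2$: the two covers branched on the sextics $Q+L_1$ and $Q+L_2$ recover the K3 surfaces $X_1, X_2$ of \cite{Gar}, while the cover $\wt Y_3$ branched on the conic $L_1 + L_2$ is a smooth rational surface. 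Using the idempotents of $\QQ[G]$ acting on the diagonal of $S \times S$, I would establish a Chow-motivic refinement of~(\ref{incoh}):
\[ h(S) \;\cong\; h(\PP^2)\,\oplus\, h^-(X_1)\,\oplus\, h^-(X_2)\,\oplus\, h^-(\wt Y_3), \]
where $h^-(Y_i)$ denotes the $\sigma_i$-antiinvariant summand. This simultaneously upgrades Theorem~\ref{main} to the level of motives.

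Each summand carries a canonical MCK: $\PP^2$ and the rational surface $\wt Y_3$ trivially (their motives are sums of Lefschetz motives), and the K3 surfaces $X_1, X_2$ by Beauville--Voisin. Gluing the individual projectors summand-by-summand produces a candidate Chow--K\"unneth decomposition $\{\pi_S^0, \pi_S^2, \pi_S^4\}$ of $h(S)$, whose cohomological action recovers the splitting already visible on~$H^*(S)$.

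The decisive step is to verify the multiplicativity relation
\[ \pi_S^k \circ \delta_S \circ (\pi_S^i \otimes \pi_S^j) \;=\; 0 \quad \hbox{in}\ A^4(S\times S\times S),\ \ k\neq i+j, \]
where $\delta_S$ denotes the small diagonal. By $G^3$-equivariance, the class $[\delta_S]$ admits an isotypic decomposition whose components pull back from the small diagonals of $\PP^2$, $X_1$, $X_2$, and $\wt Y_3$; the Beauville--Voisin small-diagonal identity on each K3 factor, together with its trivial analogue on $\PP^2$ and $\wt Y_3$, handles each piece. The \emph{principal obstacle} I anticipate is bookkeeping the ramification corrections: the small diagonal of $S$ agrees with the sum of pullback-diagonals from the quotients only modulo explicit cycles supported on the fixed loci of the $\sigma_i$. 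These correction terms are carried by a union of rational curves on $S$, so showing they remain within the MCK framework should reduce to a compatibility between the Beauville--Voisin zero-cycle classes on $X_1, X_2$ and the distinguished classes on these rational curves.

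Granting multiplicativity, the displayed consequence is formal: MCK forces $\ima\bigl(A^1(S)\otimes A^1(S)\to A^2(S)\bigr)$ into the $(\pi_S^4)_*$-image in $A^2(S)$, and the motivic decomposition above, combined with the Beauville--Voisin theorem on each K3 factor, identifies this subgroup with $\QQ\cdot [c_2(S)]$.
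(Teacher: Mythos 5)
Your overall strategy --- decompose $h(S)$ into isotypic pieces for the bidouble cover, identify those pieces with (anti-invariant parts of) the motives of $\PP^2$, the two associated K3 surfaces and the rational double cover branched on $L_1+L_2$, and then feed the Beauville--Voisin relation on each K3 piece into the multiplicativity check --- is exactly the strategy of the paper. However, the step you yourself flag as the ``principal obstacle'' is left unresolved, and it is precisely where the work lies: if you try to express $\Delta^{sm}_S$ as a sum of pullbacks of the small diagonals of the quotients, you do pick up correction terms along the ramification loci, and controlling those in $A^4(S^3)$ is not obviously easier than the original problem. The paper sidesteps this entirely. It invokes Proposition \ref{equiv} to reduce MCK to the vanishing of the modified small diagonal $\Gamma_3(S,o_S)$ with $o_S=\tfrac12 f_i^\ast(o_i)$, expands $\Gamma_3=\sum (A\times B\times C)_\ast\Gamma_3$ over the three surviving idempotents $\Delta^{++},\Delta^{+-},\Delta^{-+}$, and regroups by inclusion--exclusion so that each group equals $\tfrac18(\bar{f}^{\times 3})^\ast(\bar{f}^{\times 3})_\ast(\Gamma_3)$ for one of the quotient maps $\bar{f}\colon S\to\bar{X}_1,\bar{X}_2,\PP^2$. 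The key point is the clean \emph{pushforward} identity $(f\times f\times f)_\ast\Gamma_3(S,a)=d\,\Gamma_3(X,f_\ast(a))$ for any degree-$d$ morphism of surfaces --- no ramification correction occurs in that direction --- after which Beauville--Voisin on $X_1,X_2$ (and triviality on $\PP^2$) kills each group, and the leftover mixed terms vanish term by term because $\Delta^{sm}_S$ and $o_S$ are invariant under both involutions. So the obstacle you anticipate is an artifact of pulling back rather than pushing forward.

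Two further ingredients are missing from your plan. First, gluing projectors ``summand-by-summand'' only yields the decomposition you want after reducing, by a spread argument over the universal family, to the very general member with Picard number $1$ (Proposition \ref{rho1}); this is what forces $h(S)^{--}=0$ and puts all of $h^2_{alg}(S)$ into the $(+,+)$-piece so that the isotypic and Chow--K\"unneth decompositions align. Second, the final identification $\ima\bigl(A^1(S)\otimes A^1(S)\to A^2(S)\bigr)=\QQ[c_2(S)]$ is not purely formal: MCK gives $\ima\bigl(A^1\otimes A^1\bigr)\subset A^2_{(0)}(S)=\QQ\cdot o_S$, but one must still show that $c_2(S)$ is a \emph{nonzero} multiple of $o_S$. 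The paper does this via the Franchetta property for the universal family (Proposition \ref{fran}), writing $(\pi^j_S)_\ast c_2(S)$ as the restriction of a universally defined cycle, together with $\deg c_2(S)=\chi_{top}(S)=35\neq 0$. Your appeal to ``Beauville--Voisin on each K3 factor'' does not by itself supply this.
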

    
 This result can be seen as part of a general program aimed at understanding which varieties admit a multiplicative Chow--K\"unneth decomposition (for more on this program, cf. \S \ref{ssmck} and the references given there).

 \vskip0.6cm

\begin{convention} In this note, the word {\sl variety\/} will refer to a reduced irreducible scheme of finite type over $\C$. A {\sl subvariety\/} is a (possibly reducible) reduced subscheme which is equidimensional. 

{\bf All Chow groups will be with rational coefficients}: we will denote by $A_j(X)$ the Chow group of $j$-dimensional cycles on $X$ with $\QQ$-coefficients; for $X$ smooth of dimension $n$ the notations $A_j(X)$ and $A^{n-j}(X)$ are used interchangeably. 

The notations $A^j_{hom}(X)$, $A^j_{AJ}(X)$ will be used to indicate the subgroups of homologically trivial, resp. Abel--Jacobi trivial cycles.
For a morphism $f\colon X\to Y$, we will write $\Gamma_f\in A_\ast(X\times Y)$ for the graph of $f$.
The contravariant category of Chow motives (i.e., the category where Hom-groups are defined using Chow groups with rational coefficients as in \cite{Sc}, \cite{MNP}) will be denoted $\MM_{\rm rat}$.


\end{convention}

\section{Preliminaries}

\subsection{Special Horikawa surfaces}

\begin{proposition}[Pearlstein--Zhang \cite{PZ}]\label{prop1}
 Let $C\subset\PP^2$ be a smooth quintic curve, and let $L_1, L_2$ be distinct lines intersecting $C$ transversely and such that $C\cap L_1\cap L_2=\emptyset$.
There exists a surface $S$ obtained as the bidouble cover of $\PP^2$ branched along $C+L_1+L_2$. There exist morphisms
  \[ \bar{f}_j\colon\ \ \ S\ \to\ \bar{X}_j\ \ \ (j=1,2)\ ,\]
  where $\bar{X}_j$ is a double cover of $\PP^2$ branched along $C+L_j$. The surface $S$ is a minimal surface of general type with $p_g(S)=2$ and $K_S^2=1$. Moreover, $S$ is simply-connected and $K_S$ is ample.
  \end{proposition}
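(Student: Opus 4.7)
The plan is to realize $S$ as the $(\ZZ/2)^2$-Galois (``bidouble'') cover of $\PP^2$ branched along $C+L_1+L_2$ and then to read off its invariants from the standard calculus of abelian covers, following Catanese and Pardini. Set $D_1=C$, $D_2=L_1$, $D_3=L_2$. A smooth bidouble cover is determined by a triple of line bundles $M_1,M_2,M_3\in\pic(\PP^2)$ satisfying the parity relations $2M_i\sim D_j+D_k$ for $\{i,j,k\}=\{1,2,3\}$. Since $\pic(\PP^2)=\ZZ\cdot H$ and the degrees $\deg(L_1+L_2)=2$, $\deg(C+L_j)=6$ are all even, the (unique) choices $M_1=\OO(1)$, $M_2=M_3=\OO(3)$ provide valid data. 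Smoothness of the total space follows directly from the hypotheses: each $D_i$ is smooth, the pairwise intersections of the $D_i$ are transverse, and $C\cap L_1\cap L_2=\emptyset$ rules out triple points in the branch divisor. The surfaces $\bar X_j$ and morphisms $\bar f_j\colon S\to\bar X_j$ arise by quotienting $S$ by the appropriate index-two subgroups of the Galois group $(\ZZ/2)^2$; these quotients are precisely the double covers of $\PP^2$ branched along $C+L_j$.

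Next I would verify the numerical invariants from standard formulas. From
\[ \pi_\ast\OO_S \;=\; \OO_{\PP^2}\oplus\OO(-1)\oplus\OO(-3)^{\oplus 2} \]
one computes $\chi(\OO_S)=1+0+2=3$ and $h^1(\OO_S)=0$, hence $q(S)=0$ and $p_g(S)=2$. For the canonical class, using $\pi^\ast D_i=2R_i$ (with $R_i$ the reduced ramification over $D_i$) together with the Hurwitz formula $K_S=\pi^\ast K_{\PP^2}+R_1+R_2+R_3$ gives the $\QQ$-divisor identity
\[ K_S \;=\; \pi^\ast\bigl(K_{\PP^2}+\tfrac12(C+L_1+L_2)\bigr) \;=\; \pi^\ast\bigl(\tfrac12 H\bigr), \]
so that $K_S^2 = 4\cdot\tfrac14 = 1$. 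Because $\pi$ is finite and $H$ is ample on $\PP^2$, $K_S$ is itself ample; this forces $S$ to be minimal and of general type.

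The one delicate step is simply-connectedness of $S$. I would proceed by computing $\pi_1(\PP^2\setminus(C\cup L_1\cup L_2))$ via the Zariski--Van Kampen theorem --- it is generated by meridians $\gamma_C,\gamma_{L_1},\gamma_{L_2}$ around the three branch components, subject to explicit commutator relations at the pairwise transverse crossings --- and then identifying $\pi_1(S)$ as the kernel of the surjection $\pi_1(\PP^2\setminus B)\twoheadrightarrow(\ZZ/2)^2$ defining the cover, further killed by the squared meridians that get ``filled in'' along the ramification divisors. For this specific branch configuration the check is finite but the bookkeeping is the only non-mechanical part of the argument; since the Proposition is attributed to Pearlstein--Zhang, the computation is carried out in detail in \cite{PZ} and can simply be cited to conclude.
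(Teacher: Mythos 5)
Your proposal is correct. Note, though, that the paper offers no proof of this proposition at all: it is stated as a cited result of Pearlstein--Zhang, so there is no argument to compare against. What you have done is reconstruct the standard proof via the Catanese--Pardini formalism of $(\ZZ/2)^2$-covers, and your bookkeeping checks out: the character sheaves $M_1=\OO(1)$, $M_2=M_3=\OO(3)$ satisfy $2M_i\equiv D_j+D_k$; the hypotheses (smooth components, pairwise transverse, no triple point) are exactly the smoothness criterion for the bidouble cover; $\pi_\ast\OO_S=\OO\oplus\OO(-1)\oplus\OO(-3)^{\oplus 2}$ gives $q=0$, $p_g=2$; and $K_S=\pi^\ast(\tfrac12 H)$ gives $K_S^2=1$ and ampleness, hence minimality and general type. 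The intermediate quotients $\bar X_j$ by the index-two subgroups are indeed the double covers branched along $C+L_j$. The one step you leave to the reference, simple connectedness, is also the one step you could close cheaply: since the branch divisor $C+L_1+L_2$ has only nodes, $\pi_1(\PP^2\setminus(C\cup L_1\cup L_2))$ is abelian by Deligne--Fulton, isomorphic to $\ZZ^3/(5\gamma_C+\gamma_{L_1}+\gamma_{L_2})\cong\ZZ^2$; the kernel of the monodromy map onto $(\ZZ/2)^2$ is $2\ZZ\times 2\ZZ$, and adjoining the squared meridians kills it, so $\pi_1(S)=1$ with no Zariski--Van Kampen relations to track. (Alternatively, simple connectedness of all Horikawa surfaces with $K^2=2p_g-3$ is in \cite{Hor}.) So your route is complete and, if anything, more self-contained than the paper's.
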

  
  \begin{definition}[Pearlstein--Zhang \cite{PZ}] A surface $S$ as in Proposition \ref{prop1} will be called a {\em special Horikawa surface\/}. The K3 surfaces $X_1, X_2$ obtained by resolving the singularities of the double covers $\bar{X}_1, \bar{X}_2$ will be called the {\em associated K3 surfaces}.
  \end{definition}

  \begin{remark} The construction on which Proposition \ref{prop1} is based also occurs in recent work of Garbagnati \cite[Section 5.4.1]{Gar}; the special Horikawa surfaces are exactly the surfaces $S_6^{(1)}$ of \cite[Proposition 5.20]{Gar}.
  
  The focus of the two papers \cite{Gar} and \cite{PZ} is quite different: in \cite{Gar}, special Horikawa surfaces occur as examples in the classification of smooth double covers of K3 surfaces; in \cite{PZ}, on the other hand, the main result is a Torelli theorem for special Horikawa surfaces.
  \end{remark}

\begin{proposition}[\cite{PZ}, \cite{Gar}]\label{Htr} Let $S$ be a special Horikawa surface, and let $X_1, X_2$ be the associated K3 surfaces. There is an isomorphism of Hodge structures
  \[ H^2_{tr}(S,\QQ)\cong H^2_{tr}(X_1,\QQ)\oplus H^2_{tr}(X_2,\QQ)\ .\]
  (Here for any surface $Y$, $H^2_{tr}(Y)$ denotes the {\em transcendental cohomology\/}, i.e. the orthogonal of $NS(Y)\subset H^2(Y)$ with respect to the cup product.)
  \end{proposition}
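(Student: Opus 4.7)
The plan is to exploit the $G=(\ZZ/2)^2$-action on $S$ coming from the bidouble cover structure $\pi:S\to\PP^2$, and to decompose the cohomology of $S$ into isotypic pieces for the characters of $G$. Since $G$ acts by algebraic automorphisms, standard theory for finite group quotients gives a decomposition of $\QQ$-Hodge structures
\[ H^2(S,\QQ) = \bigoplus_{\chi\in\widehat G} H^2(S,\QQ)^\chi, \]
compatible with the splitting into N\'eron--Severi and transcendental parts. The trivial-character eigenspace equals $\pi^\ast H^2(\PP^2,\QQ)\cong\QQ$, hence is purely algebraic and contributes nothing to $H^2_{tr}(S)$.

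For each non-trivial character $\chi$ with kernel $G_\chi\subset G$ of order $2$, the intermediate quotient $Y_\chi:=S/G_\chi$ is a double cover of $\PP^2$, and $H^2(S,\QQ)^\chi$ identifies with the $(-1)$-eigenspace of the residual involution acting on $H^2(Y_\chi,\QQ)=H^2(S,\QQ)^{G_\chi}$. The bidouble cover formalism applied to the branch divisor $C+L_1+L_2$ identifies the three $Y_\chi$ with the three double covers of $\PP^2$ branched respectively along the pairwise sums $L_1+L_2$, $C+L_1$, $C+L_2$. Call the first one $\bar X_0$; the other two are then exactly the surfaces $\bar X_1,\bar X_2$ of Proposition~\ref{prop1}, by uniqueness of a double cover given its branch locus of degree~$6$ and its square root $\OO_{\PP^2}(3)$.

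Next, I would analyse each intermediate cover. The surface $\bar X_0$ is rational: the standard double-cover formulas give $p_g(\bar X_0)=h^0(\PP^2,\OO_{\PP^2}(-2))=0$ and $q(\bar X_0)=0$, so $H^2_{tr}(\bar X_0)=0$ and the corresponding $\chi$-eigenspace does not meet the transcendental part of $H^2(S)$. For $j=1,2$, the singular surface $\bar X_j$ has exactly five $A_1$-singularities, sitting over the transverse intersections $C\cap L_j$; resolving yields the K3 surface $X_j$ since $C+L_j$ is a sextic with ADE singularities. The resolution only adds classes of exceptional $(-2)$-curves, which are algebraic, so $H^2_{tr}(\bar X_j)\cong H^2_{tr}(X_j)$, compatibly with Hodge structures.

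Putting the three computations together yields
\[ H^2_{tr}(S) \;=\; \bigoplus_{\chi\in\widehat G} H^2_{tr}(S)^\chi \;\cong\; H^2_{tr}(X_1)\oplus H^2_{tr}(X_2), \]
as sub-Hodge structures of $H^2(S,\QQ)$. The main technical nuisance will be the bookkeeping at the singular locus of the branch divisor: one must check that the smoothness of $S$ (supplied by Proposition~\ref{prop1}) really yields the expected singular intermediate quotients $\bar X_0,\bar X_1,\bar X_2$, that the identification $H^2(Y_\chi,\QQ)=H^2(S,\QQ)^{G_\chi}$ still holds despite the quotient singularities on $Y_\chi$ (which is fine since they are rational), and that all exceptional divisors appearing in the resolutions $X_j\to\bar X_j$ are orthogonal, for the intersection pairing, to the transcendental part.
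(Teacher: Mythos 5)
Your argument is correct and is essentially the paper's own proof: the paper decomposes $H^2(S,\QQ)$ into the four $(\pm,\pm)$-eigenspaces for the two involutions $\sigma_1,\sigma_2$ (which is exactly your character decomposition for $G=(\ZZ/2)^2$), identifies the trivial and $(-,-)$ pieces with $H^2$ of $\PP^2$ and of the rational double cover branched along $L_1\cup L_2$, and matches the remaining two pieces with $H^2_{tr}(\bar X_j)\cong H^2_{tr}(X_j)$ via the quotient maps and the resolutions. The only difference is that you spell out the double-cover bookkeeping ($p_g$ of the rational cover, the five $A_1$-points over $C\cap L_j$) that the paper leaves implicit.
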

  
 \begin{proof} This is established in \cite[5.4.1]{Gar}, and also in \cite[Remark 1.10]{PZ}. For later use, we briefly resume the argument. Let $\sigma_1, \sigma_2$ be the involutions of $S$ such that $S/\sigma_j$ is the ``singular K3 surface'' $\bar{X}_j$. Then the cohomology of $S$ decomposes
  \begin{equation}\label{H2decomp}
     H^2(S,\QQ) =H^2(S,\QQ)^{+,+}\oplus H^2(S,\QQ)^{+,-}\oplus H^2(S,\QQ)^{-,+}\oplus H^2(S,\QQ)^{-,-}\ ,
     \end{equation}
   where $ H^2(S,\QQ)^{\pm,\mp}$ is the subspace where $\sigma_1$ acts as $\pm$ the identity, and  $\sigma_2$ acts as $\mp$ the identity.
     
    The first summand of (\ref{H2decomp}) corresponds to $H^2(\PP^2,\QQ)$, and so it is contained in $NS(S)_\QQ$. Likewise, the last summand corresponds to $H^2(W,\QQ)$, where $W$ is the double cover of $\PP^2$ branched along $L_1\cup L_2$. Since $W$ is rational, the last summand is also contained in $NS(S)_{\QQ}$.
     It follows that (\ref{H2decomp}) induces a decomposition
   \[    H^2_{tr}(S,\QQ) = H^2_{tr}(S,\QQ)^{+,-}\oplus H^2_{tr}(S,\QQ)^{-,+} \ .\]
   Here, the first summand is equal to the subspace of $H^2_{tr}(S,\QQ)$ where $(\sigma_1)^\ast=\ide$ (since, as we have just seen, $H^2_{tr}(S,\QQ)^{+,+}=0$). This means that
    \[    H^2_{tr}(S,\QQ)^{+,-} = (\bar{p}_1)^\ast H^2_{tr}(\bar{X}_1,\QQ)\ ,\]
    where $\bar{f}_1\colon S\to S/\sigma_1=:\bar{X}_1$ is the quotient morphism. Since the resolution morphism $g_1\colon X_1\to \bar{X}_1$ induces an isomorphism on $H^2_{tr}()$, it follows that
    \[     H^2_{tr}(S,\QQ)^{+,-} = (g_1)_\ast (\bar{f}_1)^\ast H^2_{tr}(\bar{X}_1,\QQ)\ .\]
   
   The set-up being symmetric with respect to $X_1, X_2$, we likewise find an isomorphism
     \[     H^2_{tr}(S,\QQ)^{-,+} = (g_2)_\ast (\bar{f}_2)^\ast H^2_{tr}(\bar{X}_2,\QQ)\ .\]  
     This proves the proposition.   
 \end{proof} 
  
\begin{proposition}[\cite{PZ}]\label{pz} Let $S$ be a special Horikawa surface. The surface $S$ is isomorphic to a smooth hypersurface 
  \[  x_3^2 =g(x_0^2,x_1^2,x_2) \]
  in weighted projective space $\PP(1,1,2,5)$, where $g$ is homogeneous of degree $5$ in $x_0^2,x_1^2,x_2$.
  Conversely, a smooth hypersurface in $\PP(1,1,2,5)$ of this type defines a special Horikawa surface.  
\end{proposition}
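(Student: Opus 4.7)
The plan is to lift the bidouble cover structure directly to weighted projective space. Choose homogeneous coordinates $y_0, y_1, y_2$ on $\PP^2$ so that $L_1=\{y_0=0\}$ and $L_2=\{y_1=0\}$, and write $C=\{f_5=0\}$ for a homogeneous quintic $f_5$; the hypothesis $L_1\cap L_2\cap C=\emptyset$ amounts to $f_5(0,0,1)\ne 0$. On $\PP(1,1,2,5)$ with coordinates $x_0,x_1,x_2,x_3$ of weights $1,1,2,5$, I would consider the weighted hypersurface
\[ \widetilde S=\bigl\{x_3^2=f_5(x_0^2,x_1^2,x_2)\bigr\}, \]
well-defined because the defining polynomial has weighted degree $10$. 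The ambient space has only two singular points, $(0:0:1:0)$ and $(0:0:0:1)$; the first is excluded from $\widetilde S$ by $f_5(0,0,1)\ne 0$, and the second is automatically avoided since $f_5$ has no constant term. A Jacobian computation, using smoothness of $C$ and its transversality to $L_1$ and $L_2$, then shows $\widetilde S$ is smooth.

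Next I would identify $\widetilde S$ with the bidouble cover. The rational map
\[ \pi\colon\PP(1,1,2,5)\dashrightarrow\PP^2,\quad (x_0,x_1,x_2,x_3)\mapsto(x_0^2:x_1^2:x_2), \]
is regular on the smooth locus of the ambient space (hence on all of $\widetilde S$) and restricts to a finite degree-$4$ morphism $\widetilde S\to\PP^2$. The sign-change involutions $\sigma_1\colon x_0\mapsto -x_0$ and $\sigma_2\colon x_1\mapsto -x_1$ preserve $\widetilde S$ and generate a $(\ZZ/2)^2$-action with quotient $\PP^2$ (the third natural involution $x_3\mapsto -x_3$ coincides with $\sigma_1\sigma_2$, since $(-x_0,-x_1,x_2,-x_3)\sim(x_0,x_1,x_2,x_3)$ via the $\C^\ast$-action at $\lambda=-1$). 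Inspecting the fixed loci, $\mathrm{Fix}(\sigma_1)$, $\mathrm{Fix}(\sigma_2)$, and $\mathrm{Fix}(\sigma_1\sigma_2)$ project to $L_1$, $L_2$, and $C$ respectively, so $\widetilde S\to\PP^2$ is a bidouble cover branched along $C+L_1+L_2$. By uniqueness of bidouble covers with prescribed branch data (Catanese's theory), $\widetilde S\cong S$.

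For the converse, a smooth hypersurface of the prescribed form carries the same $\pi$ and $(\ZZ/2)^2$-action, exhibiting it as a bidouble cover of $\PP^2$ branched along the quintic $\{g=0\}$ together with the two lines $\{y_0=0\}$ and $\{y_1=0\}$; smoothness of the total space forces $\{g=0\}$ to be smooth, transverse to each of the two lines, and disjoint from their intersection, which is precisely the data defining a special Horikawa surface.

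The main technical point is the smoothness verification: one must translate the transversality hypotheses on $(C,L_1,L_2)$ into simultaneous non-vanishing of the partial derivatives of $x_3^2-f_5(x_0^2,x_1^2,x_2)$ at every candidate singular point of $\widetilde S$, particularly along the strata $\{x_0=0\}$ and $\{x_1=0\}$ where $\pi$ itself ramifies and the chain-rule relation between partial derivatives upstairs and downstairs must be handled with care.
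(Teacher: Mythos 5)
The paper does not actually prove this statement: its ``proof'' is a one-line citation to \cite[Proposition 1.6]{PZ}. Your argument is therefore not a different route so much as a self-contained reconstruction of the proof that the paper outsources, and in outline it is correct: the weighted hypersurface $\{x_3^2=f_5(x_0^2,x_1^2,x_2)\}$ misses both singular points of $\PP(1,1,2,5)$ for the reasons you give, the map $(x_0:x_1:x_2:x_3)\mapsto(x_0^2:x_1^2:x_2)$ realizes it as a $(\ZZ/2)^2$-cover of $\PP^2$, and the identification of the branch data recovers $C+L_1+L_2$. Two points deserve tightening. First, your description of the fixed loci is slightly imprecise: $\mathrm{Fix}(\sigma_1)$ on $\widetilde S$ is not just the curve over $L_1$ but also contains the isolated points $\{x_1=x_3=0\}\cap\widetilde S$ lying over $L_2\cap C$ (where the full group stabilizes the single point of the fibre); only the divisorial part of the fixed locus maps to $L_1$, which is what matters for identifying the branch divisor, but the statement as written is not literally true. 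Second, you explicitly defer the Jacobian computation, which is the one genuinely technical step (the chain rule degenerates along $\{x_0=0\}$ and $\{x_1=0\}$, and this is exactly where the transversality of $C$ with $L_1,L_2$ and the condition $C\cap L_1\cap L_2=\emptyset$ enter); flagging it is honest, but as it stands the smoothness claim in both directions of the equivalence is asserted rather than proved. With that computation carried out, your argument would be a complete proof of the proposition, which is more than the paper itself supplies.
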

  
 \begin{proof} This is \cite[Proposition 1.6]{PZ}.
 \end{proof}

 \begin{remark} As noted in \cite{PZ}, there is an analogy with Kunev surfaces. 
 
 As shown by Catanese \cite{Cat}, the canonical model of a surface of general type with $K^2=p_g=1$
 is a complete intersection in weighted projective space $\PP(1,2,2,3,3)$. These canonical models form an $18$-dimensional family. Inside this family, there is a $12$-dimensional subfamily for which the bicanonical map is a bidouble cover of $\PP^2$; these surfaces are called ``special'' in \cite{Cat}, and they are also called ``Kunev surfaces'' or ``Todorov surfaces with $K^2=1$'' \cite{Kun}, \cite{Tod}.
 
Similarly, the canonical model of any surface of general type with $K^2=1$, $p_g=2$ and $q=0$ can be described as a hypersurface in $\PP(1,1,2,5)$ \cite{Hor}, \cite[VII.7]{BPV}. Inside this family, the special Horikawa surfaces form a $16$-dimensional subfamily where the bicanonical map is a bidouble cover of $\PP^2$ branched along a quintic and two lines.
  \end{remark} 
  
  \subsection{Quotient varieties}
\label{ssq}

\begin{definition} A {\em projective quotient variety\/} is a variety
  \[ X=Y/G\ ,\]
  where $Y$ is a smooth projective variety and $G\subset\hbox{Aut}(Y)$ is a finite group.
  \end{definition}
  
 \begin{proposition}[Fulton \cite{F}]\label{quot} Let $X$ be a projective quotient variety of dimension $n$. Let $A^\ast(X)$ denote the operational Chow cohomology ring. The natural map
   \[ A^i(X)\ \to\ A_{n-i}(X) \]
   is an isomorphism for all $i$.
   \end{proposition}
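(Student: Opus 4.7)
The plan is to exploit the presentation $X = Y/G$ with $Y$ smooth projective, together with a transfer argument that works because we are tensoring with $\QQ$. Let $\pi\colon Y\to X$ be the quotient map; it is finite surjective of degree $|G|$. Since $Y$ is smooth, the natural map $A^i(Y)\to A_{n-i}(Y)$, $c\mapsto c\cap[Y]$, is the classical Poincar\'e duality isomorphism, and operational Chow cohomology of $Y$ coincides with the usual Chow ring. So the target result is really a statement of the form ``$G$-invariants on $Y$ behave well under duality''.

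First I would set up the two transfer homomorphisms. On operational cohomology, pullback along $\pi$ gives $\pi^\ast\colon A^i(X)\to A^i(Y)$, and on Chow groups, proper pushforward gives $\pi_\ast\colon A_{n-i}(Y)\to A_{n-i}(X)$. Combining them with cap on $Y$, one obtains a composition
\[ A^i(X)\ \xrightarrow{\pi^\ast}\ A^i(Y)\ \xrightarrow{\cap[Y]}\ A_{n-i}(Y)\ \xrightarrow{\pi_\ast}\ A_{n-i}(X)\ . \]
By the projection formula applied to $\pi$, this composition equals $c\mapsto c\cap\pi_\ast[Y]=|G|\cdot(c\cap[X])$, i.e.\ $|G|$ times the map whose bijectivity we want. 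Since $|G|$ is invertible in $\QQ$, it suffices to show that the outer maps $\pi^\ast$ and $\pi_\ast$ are split injections with images the respective $G$-invariants.

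The second step is therefore to identify $A^i(X)_\QQ$ with $A^i(Y)_\QQ^G$ and $A_{n-i}(X)_\QQ$ with $A_{n-i}(Y)_\QQ^G$. For Chow groups this is the classical transfer: the averaging operator $\frac{1}{|G|}\pi_\ast$ splits $\pi^\ast$, and every $G$-invariant cycle on $Y$ is, up to $|G|$, pulled back from $X$. For operational cohomology one needs the analogous statement, which is proven by testing operational classes against arbitrary morphisms $T\to X$, pulling back along the induced finite cover $T\times_X Y\to T$ and again averaging by $\frac{1}{|G|}$. Since $\cap[Y]$ is $G$-equivariant (as $[Y]$ is $G$-invariant) and an isomorphism by Poincar\'e duality on the smooth variety $Y$, it restricts to an isomorphism $A^i(Y)^G\cong A_{n-i}(Y)^G$, which translates through the two transfer identifications into $A^i(X)\cong A_{n-i}(X)$.

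The main technical obstacle is the operational cohomology half of the transfer: unlike ordinary Chow groups, operational classes are defined by compatible systems of operators on all morphisms $T\to X$, and one must check that the averaging construction is well-defined and that every operational class on $X$ lifts canonically from a $G$-invariant operational class on $Y$. Once this is in place, however, everything collapses to Poincar\'e duality on $Y$ combined with the fact that $|G|$ is a unit, and the conclusion of the proposition follows.
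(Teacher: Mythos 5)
The paper does not actually prove this statement: its ``proof'' is the single line ``This is \cite[Example 17.4.10]{F}'', so what you have written is in effect a reconstruction of the argument behind Fulton's example rather than an alternative to anything in the paper. Your overall strategy is the standard (and correct) one: the composite $\pi_\ast\circ(\cap[Y])\circ\pi^\ast$ equals $|G|\cdot(\cap[X])$ by the compatibility of operational classes with proper pushforward together with $\pi_\ast[Y]=|G|\,[X]$ (this last point uses that $G\subset\aut(Y)$ acts faithfully, so $\pi$ is generically $|G|$-to-$1$); the identification $A_{n-i}(X)_\QQ\cong A_{n-i}(Y)_\QQ^G$ via $\pi_\ast$ is Fulton's Example 1.7.6; and $\cap[Y]$ is a $G$-equivariant isomorphism by Poincar\'e duality on the smooth variety $Y$. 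One slip of wording: $\pi_\ast$ on Chow groups is surjective, not a ``split injection with image the $G$-invariants''; what you actually need, and correctly state a few lines later, is that its restriction to $A_{n-i}(Y)_\QQ^G$ is an isomorphism onto $A_{n-i}(X)_\QQ$.

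The place where the sketch is genuinely incomplete is the one you yourself flag: the claim that $\pi^\ast\colon A^i(X)_\QQ\to A^i(Y)_\QQ$ is injective with image exactly the $G$-invariant operational classes. Injectivity is in fact not hard: if $\pi^\ast c=0$, then for any $T\to X$ the class $c$ annihilates the image of $(p_T)_\ast\colon A_\ast(T\times_X Y)_\QQ\to A_\ast(T)_\QQ$ (since $c\cap (p_T)_\ast\beta=(p_T)_\ast(\pi^\ast c\cap\beta)$), and this pushforward is surjective with $\QQ$-coefficients because $p_T$ is finite and surjective. The real content is surjectivity onto $A^i(Y)^G_\QQ$, i.e.\ descending a $G$-invariant operational class $d$ on $Y$ to $X$: one must show that the averaged operator $\alpha\mapsto\tfrac{1}{|G|}(p_T)_\ast(d\cap\beta)$, for $\beta$ a lift of $\alpha$ to $T\times_X Y$, is independent of the choice of $\beta$ and is compatible with proper pushforward, flat pullback and the Gysin maps. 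That well-definedness is precisely the substance of Fulton's Example 17.4.10, and until it is supplied your argument is a correct plan for the proof rather than a proof.
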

   
   \begin{proof} This is \cite[Example 17.4.10]{F}.
      \end{proof}

\begin{remark} It follows from Proposition \ref{quot} that the formalism of correspondences goes through unchanged for projective quotient varieties (this is also noted in \cite[Example 16.1.13]{F}). We can thus consider motives $(X,p,0)\in\MM_{\rm rat}$, where $X$ is a projective quotient variety and $p\in A^n(X\times X)$ is a projector. For a projective quotient variety $X=Y/G$, one readily proves (using Manin's identity principle) that there is an isomorphism
  \[  h(X)\cong h(Y)^G:=(Y,\Delta^G_Y,0)\ \ \ \hbox{in}\ \MM_{\rm rat}\ ,\]
  where $\Delta^G_Y$ denotes the idempotent ${1\over \vert G\vert}{\sum_{g\in G}}\Gamma_g$.  
  \end{remark}

\subsection{Multiplicative Chow--K\"unneth decomposition}
\label{ssmck}
  
The notion of multiplicative Chow--K\"unneth decomposition was introduced by Shen--Vial \cite[\S
	8]{SV}.  This notion provides an explicit candidate for Beauville's
	conjectural splitting of the conjectural Bloch--Beilinson filtration on the Chow
	rings of hyperk\"ahler varieties.
	
	First, let us recall the notion of Chow--K\"unneth decomposition:
	
	\begin{definition}[Murre \cite{Mur}] 
		Let $X$ be a smooth projective variety of
		dimension $n$. We say that $X$ has a {\em Chow--K\"unneth decomposition\/} (CK
		decomposition for short) if there exists a decomposition of the diagonal
		\[ \Delta_X= \pi^0_X+ \pi^1_X+\cdots +\pi^{2n}_X\ \ \ \hbox{in}\ A^n(X\times
		X)\ ,\]
		such that the $\pi^i_X$ are mutually orthogonal idempotents and
		$(\pi^i_X)_\ast H^\ast(X)= H^i(X)$.
	\end{definition}
	
	Assuming the Bloch--Beilinson conjectures, Jannsen \cite{J2} proved that all
	smooth projective varieties admit a CK decomposition, and moreover the CK
	projectors $\pi^i_X$ induce a splitting of the Bloch--Beilinson filtration on
	the Chow {groups}. A sufficient condition for the induced splitting to be
	compatible with the ring structure is given by the following definition:
	
	\begin{definition}[Shen--Vial \cite{SV}] 
		Let $X$ be a smooth projective variety
		of dimension $n$. Let $\Delta^{sm}_X \in A^{2n}(X\times X\times X)$ be the class
		of
		the small diagonal
				\[ \Delta^{sm}_X:=\bigl\{ (x,x,x)\ \vert\ x\in X\bigr\}\ \subset\ X\times X\times
		X\ .\]
		A \emph{multiplicative Chow--K\"unneth decomposition} (MCK decomposition for
		short) is a CK decomposition $\{\pi^i_X\}$ of $X$ that is {\em
			multiplicative\/}, \emph{i.e.} that satisfies
		\[ \pi^k_X\circ \Delta^{sm}_X \circ (\pi^i_X\times \pi^j_X)=0\ \ \ \hbox{in}\
		A^{2n}(X\times X\times X)\ \ \ \hbox{for\ all\ }i+j\not=k\ .\]
		An MCK decomposition is necessarily {\em self-dual\/}, \emph{i.e.} it
		satisfies
		\[ \pi^k_X={}^t \pi^{2n-k}_X\ \ \  \forall \ k\ \]
		(where the superscript ${}^t ()$ indicates
		the transpose correspondence),
		cf. \cite[footnote 24]{FV}.
	\end{definition}

	\begin{remark}
	It follows from the definition that if $X$ has an MCK decomposition $\{\pi^i_X\}$, then setting
	\[ A^i_{(j)}(X):= (\pi_X^{2i-j})_\ast A^i(X) \ ,\]
	one obtains a bigraded ring structure on the Chow ring: that is, the
	intersection product sends 
	$A^i_{(j)}(X)\otimes A^{i^\prime}_{(j^\prime)}(X) $ to 
	$A^{i+i^\prime}_{(j+j^\prime)}(X)$. 
	
	While a CK decomposition is expected to exist for any smooth projective variety \cite{Mur}, \cite{J2}, the property of having
	an MCK decomposition is restrictive. For example, a very general
	curve of genus $\geq 3$ does not admit an MCK decomposition. The existence of
	an MCK decomposition is closely related to Beauville's ``weak splitting property''
	\cite{Beau3}, and it is conjectured that
	hyperk\"ahler varieties admit an MCK decomposition \cite[Conjecture~4]{SV}. 
	For a K3 surface $S$, the seminal work of Beauville--Voisin \cite{BV} establishes 
	the existence of a canonical zero-cycle $o_S\in A^2(S)$ of degree 1 that
	``decomposes'' the small diagonal in $S\times S\times S$.
	As observed in \cite[Proposition~8.14]{SV}, this can be interpreted as saying that the
	CK decomposition defined by 
	  \[ \pi_S^0 := o_S\times S\ , \ \pi_S^4 :=
	        S\times o_S\ ,\ \pi_S^2 = \Delta_S - \pi_S^0 - \pi_S^4 \] 
	is multiplicative.
	The MCK conjecture for hyperk\"ahler varieties
	has been established for Hilbert
	schemes of length $n$ subschemes on K3 surfaces \cite{V6}, \cite{NOY}, and for
	generalized Kummer varieties \cite{FTV}. Other examples of varieties admitting an MCK decomposition
	can be found in \cite{SV2}, \cite{LV}, \cite{FLV}, \cite{d3}, \cite{Ver}, \cite{S2}, \cite{B1B2}. 	
	\end{remark}
	 
  For later use, we record the following useful equivalence:
  
  \begin{proposition}[Shen--Vial \cite{SV}]\label{equiv} Let $X$ be a smooth projective surface with $H^1(X,\QQ)=0$. The following are equivalent:
  
  \noindent
  (\rom1) $X$ has an MCK decomposition;
  
  \noindent
  (\rom2) there exists $o_X\in A^2(X)$ such that 
     \[\begin{split} \Gamma_{3}(X,
			o_X):=&\Delta^{sm}_{X}-p_{12}^{*}(\Delta_{X})p_{3}^{*}(o_X)-p_{23}^{*}(\Delta_{X})p_{1}^{*}(o_X)-p_{13}^{*}(\Delta_{X})p_{2}^{*}(o_X)\\
			&+p_{1}^{*}(o_X)p_{2}^{*}(o_X)+p_{1}^{*}(o_X)p_{3}^{*}(o_X)+p_{2}^{*}(o_X)p_{3}^{*}(o_X) \ \  =0\ \ \ \ \ \hbox{in}\ A^4(X^3)\ .\\
			\end{split}\]  
  (the cycle $\Gamma_3(X,o_X)$ is known as the {\em modified small diagonal\/}). 
   \end{proposition}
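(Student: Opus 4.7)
The plan is to argue separately (ii)$\Rightarrow$(i) and (i)$\Rightarrow$(ii), both by direct manipulation of three-fold correspondences on $X$. A key preliminary observation is that the hypothesis $H^1(X,\QQ)=0$ together with Poincar\'e duality gives $H^3(X,\QQ)=0$, so that in cohomology the K\"unneth decomposition of $\Delta_X$ involves only components of bidegree $(0,0)+(0,4)$, $(2,2)$ and $(4,0)$. This is what allows the whole CK decomposition to be encoded by a single class $o_X$.

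For (ii)$\Rightarrow$(i), suppose $\Gamma_3(X,o_X)=0$. First I would observe that intersecting $\Gamma_3(X,o_X)$ with a general fiber of $p_{12}$ (or just taking degrees) forces $\deg o_X=1$. Then I would set
\[ \pi^0_X:=o_X\times X,\qquad \pi^4_X:=X\times o_X,\qquad \pi^2_X:=\Delta_X-\pi^0_X-\pi^4_X, \]
and verify by the standard computation of composition of correspondences that the degree $1$ condition makes these mutually orthogonal idempotents, while $H^1(X,\QQ)=0$ identifies their cohomological action with the K\"unneth projectors. To check multiplicativity, I would write out, term by term,
\[ \pi^k_X\circ\Delta^{sm}_X\circ(\pi^i_X\times\pi^j_X)\qquad\text{in }A^4(X\times X\times X) \]
for the nine non-trivial pairs $(i,j)\in\{0,2,4\}^2$. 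Using the identity $\Delta^{sm}_X = p_{12}^\ast(\Delta_X)\cdot p_{23}^\ast(\Delta_X)$ and the explicit forms of $\pi^0_X$, $\pi^4_X$, each such composition becomes one of the seven monomials $p_{ab}^\ast(\Delta_X)p_c^\ast(o_X)$ or $p_a^\ast(o_X)p_b^\ast(o_X)$ appearing in the definition of $\Gamma_3(X,o_X)$; expanding $\pi^2_X$ accordingly, the vanishing of $\Gamma_3(X,o_X)$ is exactly what is needed to kill every composition with $i+j\neq k$.

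For (i)$\Rightarrow$(ii), assume $\{\pi^i_X\}$ is an MCK decomposition. Since $H^1(X,\QQ)=H^3(X,\QQ)=0$, Murre's classification of projectors on a surface (or a direct argument using that the transpose of $\pi^0_X$ is a K\"unneth projector onto $H^4$ together with $A^1(X)\otimes H^0(X)\subset \ker$) forces $\pi^0_X=o_X\times X$ and $\pi^4_X={}^t\pi^0_X=X\times o_X$ for a single $o_X\in A^2(X)$ of degree $1$; then $\pi^2_X=\Delta_X-\pi^0_X-\pi^4_X$. I now plug the decomposition $\Delta_X=\pi^0_X+\pi^2_X+\pi^4_X$ into the obvious identity
\[ \Delta^{sm}_X = \sum_{i,j,k}\pi^k_X\circ\Delta^{sm}_X\circ(\pi^i_X\times\pi^j_X), \]
and discard every summand with $i+j\neq k$ by the MCK hypothesis. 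The surviving summands, after expansion via $\pi^0_X=o_X\times X$ and $\pi^4_X=X\times o_X$, reassemble precisely into the seven terms $p_{ab}^\ast(\Delta_X)p_c^\ast(o_X)$ and $p_a^\ast(o_X)p_b^\ast(o_X)$ whose alternating sum is $\Delta^{sm}_X-\Gamma_3(X,o_X)$; comparing with the starting expression yields $\Gamma_3(X,o_X)=0$.

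The one genuinely delicate point is the bookkeeping in the triple-composition calculation: one needs to track which summand of $\Delta^{sm}_X\circ(\pi^i_X\times\pi^j_X)$ lands in which graded piece, and to see that in both directions the correspondence between MCK vanishings and monomials in $\Gamma_3(X,o_X)$ is a bijection. Once this combinatorial matching is set up cleanly, everything else reduces to standard properties of the projection formula and of products of diagonals in $A^\ast(X^3)$.
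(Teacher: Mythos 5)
The paper gives no argument here at all: it simply cites \cite[Proposition 8.14]{SV}. What you have written is, in outline, a reconstruction of Shen--Vial's proof, and its core is sound: the identity $\Delta^{sm}_X=p_{12}^*(\Delta_X)\cdot p_{23}^*(\Delta_X)$, the observation that $H^1(X,\QQ)=0$ forces $H^3(X,\QQ)=0$ so that $\pi^0_X=o_X\times X$, $\pi^4_X=X\times o_X$, $\pi^2_X=\Delta_X-\pi^0_X-\pi^4_X$ is a genuine CK decomposition, and the term-by-term matching between the compositions $\pi^k_X\circ\Delta^{sm}_X\circ(\pi^i_X\times\pi^j_X)$ and the seven monomials of $\Gamma_3(X,o_X)$ are exactly the ingredients of the cited proof. (For (ii)$\Rightarrow$(i) the right way to phrase the matching is: once $\Gamma_3(X,o_X)=0$, the small diagonal equals the sum of the six correction terms, and each of those six, taken individually, annihilates every composition with $i+j\neq k$; for (i)$\Rightarrow$(ii) one checks that the surviving terms with $i+j=k$ sum to $\Delta^{sm}_X-\Gamma_3(X,o_X)$.) One small repair: a general fibre of $p_{12}$ is $2$-dimensional, as is $\Gamma_3(X,o_X)$, so their intersection in the sixfold $X^3$ is empty and ``intersecting with a fibre'' yields nothing. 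The correct move is to push $\Gamma_3(X,o_X)$ forward along $p_3$, which gives $(1-\deg o_X)^2\,[X]$ in $A^0(X)$ and hence $\deg o_X=1$.

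The one genuine gap is the step in (i)$\Rightarrow$(ii) where you assert that an arbitrary MCK decomposition must have $\pi^0_X=o_X\times X$. Murre's results produce CK projectors of this form; they do not say that \emph{every} Chow--K\"unneth projector onto $H^0$ is of this form (two such projectors differ a priori by a homologically trivial idempotent-correcting correspondence), and neither self-duality nor multiplicativity obviously rules this out for a surface. This imprecision is in fact inherited from the proposition as stated: \cite[Proposition 8.14]{SV} is an equivalence for the \emph{specific} CK decomposition attached to a fixed degree-one zero-cycle $o$, and promoting it to the purely existential statement ``some MCK decomposition exists iff some $o_X$ kills $\Gamma_3$'' requires exactly the normalization you are glossing over. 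Fortunately the paper only ever uses the implication (ii)$\Rightarrow$(i) (in the proof of Theorem \ref{th:mck}), and your argument for that direction is complete once the bookkeeping above is carried out.
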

   
   \begin{proof} This is \cite[Proposition 8.14]{SV}.
     \end{proof}

%

  \subsection{Relative K\"unneth projectors}
  
   \begin{notation}\label{fam} Let
   \[ \Ss\ \to\ B \]
   denote the family of all smooth hypersurfaces in $\PP:=\PP(1,1,2,5)$ of type
   \[ f_b(x_0,x_1,x_2,x_3)=0\ ,\]
   where $f_b$ is weighted homogeneous of degree $10$, and $x_0, x_1$ occur only in even degree. Let $S_b$ denote the fibre of $\Ss$ over $b\in B$.
   
   Let $\sigma_j\in\aut(\PP)$, $j=1,2$, be the involutions defined as
   \[  \begin{split}  \sigma_1 [x_0:x_1:x_2:x_3] =[-x_0:x_1:x_2:x_3]\ ,\\
                         \sigma_2  [x_0:x_1:x_2:x_3] =[x_0:-x_1:x_2:x_3]\ \\
                         \end{split}\]
%
    \end{notation}
    
   \begin{lemma} Let $\Ss\to B$ be as in Notation \ref{fam}. For any $b\in B$, the fibre $S_b$ is isomorphic to a special Horikawa surface. The quotients
   $S_b/\langle \sigma_j\vert_b\rangle$ are the ``singular K3 surfaces'' $\bar{X}_j$, $j=1,2$.
    \end{lemma}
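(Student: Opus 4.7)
The plan is to verify the two assertions in turn, matching the family of Notation~\ref{fam} with the description of Proposition~\ref{pz}.

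First I would show that each $S_b$ is a special Horikawa surface by enumerating the monomials $x_0^{a_0}x_1^{a_1}x_2^{a_2}x_3^{a_3}$ of weighted degree $10$ satisfying $a_0,a_1$ even. A parity check forces $a_3\in\{0,2\}$: if $a_3$ were odd then $5a_3$ would be odd, but $a_0+a_1+2a_2$ is even, so the total weight could not equal $10$. The unique monomial with $a_3=2$ is $x_3^2$ itself, so $f_b$ must have the form
\[ f_b = \alpha x_3^2 - g(x_0^2,x_1^2,x_2), \]
where $g$ is weighted homogeneous of degree $10$ (equivalently, of total degree $5$ in the symbols $x_0^2, x_1^2, x_2$). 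Smoothness of $S_b$ forbids $\alpha=0$: otherwise $f_b(0,0,0,1)=0$, so the $\mu_5$-quotient singularity $[0\!:\!0\!:\!0\!:\!1]$ of $\PP$ would lie on $S_b$, making $S_b$ singular there. Rescaling $x_3$ normalizes $\alpha=1$, and Proposition~\ref{pz} then declares $S_b$ a special Horikawa surface.

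Next I would identify $S_b/\langle\sigma_j\rangle$ with $\bar X_j$. The involutions $\sigma_j$ manifestly preserve $S_b$, since $x_0,x_1$ appear in $f_b$ only in even degree. The $\sigma_1$-invariant subring of $\C[x_0,x_1,x_2,x_3]$ is $\C[x_0^2,x_1,x_2,x_3]$ with weights $(2,1,2,5)$, so $\PP/\sigma_1 \cong \PP(2,1,2,5)$ and the image of $S_b$ is cut out there by $x_3^2 = g(y_0,x_1^2,x_2)$ with $y_0:=x_0^2$. Projecting onto $\PP^2\cong \PP(2,2,2)$ by $[y_0:x_1:x_2:x_3]\mapsto[y_0:x_1^2:x_2]$ and counting sign choices modulo the residual $\C^*$-action of $\PP(2,1,2,5)$ realizes $S_b/\sigma_1$ as a double cover of $\PP^2$ branched along the union of the line $\{x_1^2=0\}$ and the quintic $\{g(y_0,x_1^2,x_2)=0\}$; this is by definition a singular K3 surface of the type $\bar X_1$ (declaring $L_1$ to be the branch line). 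The argument for $\sigma_2$ is identical after swapping the roles of $x_0$ and $x_1$.

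Nothing here is substantively hard; everything reduces to a monomial enumeration plus an invariant-ring computation. The only point requiring real care is keeping the labeling of $L_j$ in Proposition~\ref{prop1} compatible with that of $\sigma_j$ in Notation~\ref{fam}, which is absorbed into the choice of which branch line is called $L_1$ and which is called $L_2$.
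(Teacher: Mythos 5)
Your proof is correct and ends where the paper's does, namely at the normal form $x_3^2=g(x_0^2,x_1^2,x_2)$ of Proposition \ref{pz}, but it gets there differently and proves more. For the first assertion you enumerate monomials and use parity of the weight of $x_3$ to force $a_3\in\{0,2\}$, whereas the paper obtains the same normal form by descending $f_b$ through the weighted-projective-space isomorphisms $\PP(2,2,2,5)\cong\PP(2,2,2,10)\cong\PP(1,1,1,5)$, where a degree-$5$ form is visibly $\alpha z+g(u_0,u_1,u_2)$; the two computations are equivalent, yours being more elementary and the paper's slicker. Your exclusion of $\alpha=0$ is right, though the one-line justification ``the $\mu_5$-point lies on $S_b$, hence $S_b$ is singular there'' deserves a word of support: a divisor through an isolated cyclic quotient singularity of type $\tfrac{1}{5}(1,1,2)$ is always singular at that point (by purity of the branch locus applied to the induced $\mu_5$-cover, or, more concretely here, because the local equation $g(x_0^2,x_1^2,x_2)$ has multiplicity at least $5$ at the origin of the chart $x_3=1$). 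For the second assertion, the paper simply appeals to Proposition \ref{pz} and leaves the identification of the quotients implicit; your invariant-ring computation, exhibiting $S_b/\sigma_1$ inside $\PP(2,1,2,5)$ and then as a double cover of $\PP^2$ branched along a line plus the quintic, fills in exactly what the paper only asserts, and your observation that $\sigma_1$ (negating $x_0$) yields the cover branched along the image of $\{x_1=0\}$, so that the labelling of $L_1,L_2$ is a matter of convention, is accurate.
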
   
    
    \begin{proof} Suppose $S_b$ is defined by the equation
      \[ f_b(x_0,x_1,x_2,x_3)=0\ .\]
      Since $x_0, x_1$ only occur in even degrees, $S_b$ is the inverse image of a surface $S_b^\prime\subset \PP^\prime:=\PP(2,2,2,5)$ under the $(\ZZ/2\ZZ)^2$ covering map
    \[  \PP \ \to\ \PP^\prime\ .\]    
  The $\ZZ/2\ZZ$ covering
    \[ \PP^\prime\ \to\ \PP(2,2,2,10) \]
    is an isomorphism, since both these spaces are isomorphic to $\PP(1,1,1,5)$ \cite{Dol}. This implies that $S_b$ is isomorphic to a surface in $\PP$ defined by an equation
    \[ x_3^2=g(x_0^2,x_1^2,x_2)\ , \]  
    where $g$ is a quintic. The result now follows from Proposition \ref{pz}.
        \end{proof}

  \begin{lemma}\label{relCK} Let $\Ss\to B$ be the universal family of special Horikawa surfaces (cf. Notation \ref{fam}). There exist relative correspondences
    \[ \pi^0_\Ss\ ,\ \ \pi^2_\Ss\ ,\ \ \pi^4_\Ss\ \ \ \in A^2(\Ss\times_B \Ss)\ ,\]
    with the property that for each $b\in B$, the restriction
    \[ \pi^i_\Ss\vert_b := \pi^i_\Ss\vert_{S_b\times S_b}\ \ \ \in A^2(S_b\times S_b) \]
    is a self-dual Chow--K\"unneth decomposition. Moreover,
     \[ (\pi^2_\Ss\vert_b)_\ast=\ide\colon\ \ \ A^2_{hom}(S_b)\ \to\ A^2_{hom}(S_b)\ \ \ \forall\ b\in B\ ,\]
     and
     \[   \pi^0_\Ss\vert_b = {1\over 2}\, (f_i)^\ast(o_i)\times S_b\ \ \ \hbox{in}\ A^2(S_b\times S_b)\ \ \ \forall\ b\in B\ ,\ i=1,2\ ,\]
     where $f_i\colon S_b\dashrightarrow X_i$ denotes the rational map to the associated K3 surface $X_i$, and $o_i\in A^2(X_i)$ is the distinguished zero-cycle of \cite{BV}.
      \end{lemma}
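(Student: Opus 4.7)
My plan has two parts: construct a relative distinguished zero-cycle $o_\Ss \in A^2(\Ss)$ of fibrewise degree one, and use it to build the projectors in the standard K3 style.

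For the first part, I would spread out the associated K3 construction. Set $\bar{\XX}_j := \Ss/\langle\sigma_j\rangle \to B$ for $j=1,2$ (fibres the singular K3s $\bar X_{j,b}$), and take a simultaneous resolution $\XX_j \to \bar{\XX}_j$, giving the universal family of associated K3 surfaces together with the relative rational map $f_j \colon \Ss \dashrightarrow \XX_j$ of generic degree $2$. To define the zero-cycle I would take a natural one-parameter family of rational curves $\mathcal R \subset \Ss$ over $B$ --- for instance, the inverse image in $\Ss$ of a suitable line in $\PP^2$, or an irreducible component of the relative ramification divisor of one of the involutions $\sigma_j$ --- and define $o_\Ss \in A^2(\Ss)$ as an appropriate multiple of the class of a section of $\mathcal R \to B$. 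Since any rational curve in $S_b$ is mapped by the finite map $\bar f_j$ onto a rational curve in $\bar X_j$, the Beauville--Voisin principle \cite{BV} applied to each K3 surface $X_j$ gives, on each fibre,
\[ o_\Ss|_b = \tfrac{1}{2}(f_i)^\ast(o_i)\ \ \ \hbox{in}\ A^2(S_b)\ , \ \ \ i=1, 2\ . \]

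For the second part, define
\[ \pi^0_\Ss := o_\Ss \times_B \Ss,\ \ \ \pi^4_\Ss := \Ss \times_B o_\Ss,\ \ \ \pi^2_\Ss := \Delta_{\Ss/B} - \pi^0_\Ss - \pi^4_\Ss. \]
Fibrewise, $\pi^0_\Ss|_b$ and $\pi^4_\Ss|_b$ are mutually orthogonal idempotents because $\deg(o_{S_b}) = 1$, and they act on $H^\ast(S_b)$ as the K\"unneth projectors onto $H^0$ and $H^4$. Using $q(S_b) = 0$ to kill $H^1$ and $H^3$, this makes $\pi^2_\Ss|_b$ into the remaining K\"unneth projector, and $\{\pi^i_\Ss|_b\}$ becomes a self-dual CK decomposition. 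The claim $(\pi^2_\Ss|_b)_\ast = \ide$ on $A^2_{hom}(S_b)$ is then immediate: for $\alpha \in A^2_{hom}(S_b)$ we have $(\pi^4_\Ss|_b)_\ast(\alpha) = \deg(\alpha)\cdot o_{S_b} = 0$, while $(\pi^0_\Ss|_b)_\ast(\alpha)$ factors through $\alpha \cdot o_{S_b} \in A^4(S_b) = 0$, so $(\pi^2_\Ss|_b)_\ast(\alpha) = \alpha$.

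The main obstacle is establishing the displayed identity $o_\Ss|_b = \tfrac{1}{2}(f_i)^\ast(o_i)$ for \emph{both} values of $i$ simultaneously; equivalently, the fibrewise equality $\tfrac{1}{2}(f_1)^\ast(o_1) = \tfrac{1}{2}(f_2)^\ast(o_2)$ in $A^2(S_b)$. This amounts to checking that two natural degree-one zero-cycles on a surface of general type coincide modulo rational equivalence. The strategy would be to exhibit an explicit rational curve on $S_b$ (exploiting the bidouble cover structure $S_b\to\PP^2$: lines in $\PP^2$ meeting the branch configuration in special ways give $\sigma_1$- and $\sigma_2$-invariant rational curves on $S_b$) whose image is rational in both $\bar X_1$ and $\bar X_2$, and then to apply Beauville--Voisin to each K3 and chase the identities through the degree-$2$ maps $\bar f_j$.
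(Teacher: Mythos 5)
Your overall architecture (a relative degree-one zero-cycle $o_\Ss$ plus the standard projectors $\pi^0=o_\Ss\times_B\Ss$, $\pi^4=\Ss\times_B o_\Ss$, $\pi^2=\Delta_{\Ss/B}-\pi^0-\pi^4$) is exactly the paper's, and your verification of the CK properties and of $(\pi^2_\Ss\vert_b)_\ast=\ide$ on $A^2_{hom}$ is fine. The problem is the step you yourself flag as ``the main obstacle'': the fibrewise identity $o_\Ss\vert_b=\tfrac12(f_1)^\ast(o_1)=\tfrac12(f_2)^\ast(o_2)$. You only offer a strategy for it, and the strategy is doubtful. It requires a rational curve on $S_b$ that is invariant under both involutions and maps to rational curves on both K3s; but $S_b$ is of general type, and the natural candidates you name do not work: the preimage of a general line in $\PP^2$ is a $4{:}1$ cover of $\PP^1$ branched over the $7$ points of $\ell\cap(C+L_1+L_2)$ and has positive genus, and the same goes for the ramification curves of the $\sigma_j$ (e.g.\ the preimage of $L_j$ is a double cover of $\PP^1$ branched at $6$ points, hence of genus $2$). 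Moreover, even granting such a curve $R$ and a point $p\in R$, Beauville--Voisin gives the \emph{pushforward} identity $(f_j)_\ast(p)=o_j$; to convert this into the needed \emph{pullback} identity $(f_j)^\ast(o_j)=2p$ you must additionally know $p=\sigma_j(p)$ in $A^2(S_b)$, which again hinges on the (unestablished) invariant rational curve.

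The paper avoids rational curves entirely by invoking the other half of Beauville--Voisin: intersections of divisors on a K3 surface lie in $\QQ\, o_X$. Take $H$ to be the pullback to $\Ss$ of the hyperplane class of $\PP^2$ and set $\pi^0_\Ss=\tfrac1d (p_1)^\ast(H^2)$ with $d=\deg(H^2\vert_{S_b})=4$. Since $H\vert_{S_b}$ descends to a divisor $\bar h_i$ on $\bar X_i$ for \emph{both} $i=1,2$ (it comes from $\PP^2$), one gets
$H^2\vert_{S_b}=(\bar f_i)^\ast(\bar h_i)^2=(\bar f_i)^\ast (g_i)_\ast\bigl((g_i)^\ast \bar h_i\bigr)^2=2\,(f_i)^\ast(o_i)$,
the last equality by the divisor-intersection statement of \cite{BV} applied to the K3 surface $X_i$. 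This yields the simultaneous identity for $i=1,2$ in one line, with no need to produce any curve on $S_b$. As written, your proof has a genuine gap at precisely this point; replacing your rational-curve construction of $o_\Ss$ by $\tfrac14 H^2$ closes it.
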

     
    \begin{proof} The existence of relative correspondences is well-known, and holds more generally for any family of surfaces $\Ss\to B$ with $H^1(S_b)=0$. Let $H\in A^1(\Ss)$ be a relatively ample divisor, and let $d:=\deg (H^2\vert_{S_b})$. One defines
      \[ \begin{split}   \pi^0_\Ss &:= {1\over d} (p_1)^\ast(H^2)\ ,\\
                               \pi^4_\Ss &:= {1\over d} (p_2)^\ast(H^2)\ ,\\
                               \pi^2_\Ss &:= \Delta_\Ss - \pi^0_\Ss -\pi^4_\Ss\ \ \ \ \in\ A^2(\Ss\times_B \Ss)\ .\\
                               \end{split}\]
                   It is readily checked this defines a self-dual Chow--K\"unneth decomposition on any fiber.
                   
    Next, let us restrict to the family $\Ss\to B$ of Notation \ref{fam}. Then, taking $H$ to be the pullback of the hyperplane section of $\PP^2$, the restriction
    $H\vert_{S_b}$ comes from a divisor $\bar{h}_i$ on the ``singular K3 surface'' $\bar{X_i}$ for $i=1,2$, and so
     \[  H^2\vert_{S_b}= (\bar{f}_i)^\ast (\bar{h}_i)^2= (\bar{f}_i)^\ast  (g_i)_\ast (g_i)^\ast   (\bar{h}_i)^2 =(f_i)^\ast(o_i)   \ \ \ \hbox{in}\ A^2(S_b)\ \ \ \forall\ b\in B\ ,\ i=1,2\ ,\]
     as requested.                         
        \end{proof}

 \subsection{Transcendental part of the motive}
 
 \begin{theorem}[Kahn--Murre--Pedrini \cite{KMP}]\label{t2}    Let $S$ be any smooth projective surface, and let $h(X)\in\MM_{\rm rat}$ denote the Chow motive of $S$.
There exists a self-dual Chow--K\"unneth decomposition $\{\pi^i_S\}$ of $S$, with the property that there is a further splitting in orthogonal idempotents
  \[ \pi^2_S= \pi^{2,alg}_{S}+\pi^{2,tr}_{S}\ \ \hbox{in}\ A^2(S\times S)_{}\ .\]
  The action on cohomology is
  \[  (\pi^{2,alg}_{S})_\ast H^\ast(S,\QQ)= N^1 H^2(S,\QQ)\ ,\ \ (\pi^{2,tr}_{S})_\ast H^\ast(S,\QQ) = H^2_{tr}(S,\QQ)\ ,\]
  where the transcendental cohomology $H^2_{tr}(S,\QQ)\subset H^2(S,\QQ)$ is defined as the orthogonal complement of $N^1 H^2(S,\QQ)$ with respect to the intersection pairing. The action on Chow groups is
  \[ (\pi^{2,alg}_{S})_\ast A^\ast(S)_{}= N^1 H^2(S,\QQ)\ ,\ \ (\pi^{2,tr}_{S})_\ast A^\ast(S) = A^2_{AJ}(S)_{}\ .\]  
 This gives rise to a well-defined Chow motive
  \[ h^{tr}_2(S):= (S,\pi^{2,tr}_{S},0)\ \subset \ h(X)\ \ \in\MM_{\rm rat}\ ,\]
  the so-called {\em transcendental part of the motive of $S$}.
  \end{theorem}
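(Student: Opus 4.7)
The plan is to construct $\pi^{2,alg}_S$ explicitly from a basis of the N\'eron--Severi group, to set $\pi^{2,tr}_S:=\pi^2_S-\pi^{2,alg}_S$, and to work from a self-dual CK decomposition of $S$ produced by Murre's method \cite{Mur}. The serious technical point is enforcing Chow-level orthogonality of all six resulting projectors.

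To construct $\pi^{2,alg}_S$, I would fix a $\QQ$-basis $D_1,\dots,D_\rho$ of $N^1 H^2(S,\QQ)$. By the Hodge index theorem the intersection form is non-degenerate there, so the matrix $M=(D_i\cdot D_j)$ is invertible; write $(a_{ij}):=M^{-1}$ and set
\[ \pi^{2,alg}_S\ :=\ \sum_{i,j} a_{ij}\,D_i\times D_j\ \in\ A^2(S\times S). \]
A direct application of the composition formula
\[ (\alpha\times\beta)\circ(\gamma\times\delta)\ =\ \deg(\delta\cdot\alpha)\,\gamma\times\beta \]
(valid when $\delta\cdot\alpha$ is a $0$-cycle, and zero for dimension reasons otherwise), combined with the identity $\sum_l a_{kl}(D_l\cdot D_i)=\delta_{ki}$, shows that $\pi^{2,alg}_S$ is idempotent; symmetry $a_{ij}=a_{ji}$ gives self-duality. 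The same calculation shows that $(\pi^{2,alg}_S)_\ast$ acts on $H^\ast(S,\QQ)$ as the K\"unneth projector onto $N^1H^2(S,\QQ)$ (it fixes each $D_k$, annihilates $H^2_{tr}$ by the orthogonality definition of the latter, and kills $H^i$ for $i\neq 2$ for degree reasons), and acts on Chow groups as the projection of $A^1(S)$ onto $\QQ\langle D_1,\dots,D_\rho\rangle$, vanishing on $A^0(S)$ and $A^2(S)$ (the latter because $D_j\cdot o$ lies in $A^3(S)=0$).

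To embed this into a full decomposition I would take Murre's self-dual CK decomposition for $S$, normalized so that $\pi^0_S=o_S\times S$ and $\pi^4_S=S\times o_S$ for a chosen class $o_S\in A^2(S)$ of degree $1$, and so that $\pi^1_S={}^t\pi^3_S$ is supported on $C\times S$ for a smooth curve $C\subset S$ dominating $\alb(S)$. The composition rule then gives $\pi^{2,alg}_S\circ\pi^i_S=\pi^i_S\circ\pi^{2,alg}_S=0$ for $i=0,4$ at once. The principal obstacle is the analogous orthogonality with $\pi^1_S,\pi^3_S$: here the compositions vanish only modulo homological equivalence (distinct K\"unneth components), and must be cleared by a standard modification \`a la Jannsen \cite{J2}, replacing $\pi^1_S$ and $\pi^3_S$ by homologically equivalent representatives that are genuinely orthogonal to $\pi^{2,alg}_S$ from both sides. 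After this adjustment the classes $\pi^0_S,\pi^1_S,\pi^{2,alg}_S,\pi^{2,tr}_S,\pi^3_S,\pi^4_S$, with $\pi^{2,tr}_S:=\pi^2_S-\pi^{2,alg}_S$, are mutually orthogonal self-dual idempotents summing to $\Delta_S$, which promotes $\pi^{2,tr}_S$ to a projector and defines $h^{tr}_2(S)$.

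It remains to identify the actions on cohomology and Chow groups. The cohomology statements are immediate from $H^2(S,\QQ)=N^1H^2(S,\QQ)\oplus H^2_{tr}(S,\QQ)$ combined with the calculation above. For Chow groups, Murre's construction forces $(\pi^2_S)_\ast$ on $A^1(S)$ to be the projection onto $NS(S)_\QQ$ (since $(\pi^1_S)_\ast|_{A^1}$ lands in $\pic^0(S)_\QQ$ and $(\pi^i_S)_\ast|_{A^1}=0$ for $i=0,3,4$), and this projection coincides with $(\pi^{2,alg}_S)_\ast|_{A^1}$; hence $(\pi^{2,tr}_S)_\ast$ vanishes on $A^0(S)$ and $A^1(S)$. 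On $A^2(S)$ it agrees with $(\pi^2_S)_\ast$, whose image is $A^2_{AJ}(S)$ by Murre's theorem for surfaces, yielding $(\pi^{2,tr}_S)_\ast A^\ast(S)=A^2_{AJ}(S)$.
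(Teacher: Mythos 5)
Your proposal is correct and follows essentially the same route as the paper, whose ``proof'' is simply a citation of \cite[Propositions 7.2.1 and 7.2.3]{KMP}: the projector $\sum_{i,j}a_{ij}\,D_i\times D_j$ built from the inverse of the intersection matrix on $NS(S)_\QQ$, followed by the standard orthogonalization against $\pi^1_S,\pi^3_S$, is exactly the Kahn--Murre--Pedrini construction. (Note that for the surfaces actually used in this paper one has $q=0$, so $\pi^1_S=\pi^3_S=0$ and the one delicate step you flag is vacuous.)
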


\begin{proof} Let $\{\pi^i_S\}$ be a Chow--K\"unneth decomposition as in \cite[Proposition 7.2.1]{KMP}. The assertion then follows from \cite[Proposition 7.2.3]{KMP}.
\end{proof} 
   
 \begin{remark} The construction of Theorem \ref{t2}, using only formal properties, is still valid for surfaces that are quotient varieities.
 \end{remark}

 \section{Results}

 \subsection{Splitting the motive}
 
\begin{theorem}\label{main} Let $S$ be a special Horikawa surface, and let $X_1, X_2$ be the associated $K3$ surfaces. There is an isomorphism
     \[ A^2_{hom}(S)\cong A^2_{hom}(X_1)\oplus A^2_{hom}(X_2)\ .\] 
     Moreover, for an appropriate choice of points defining $h^0(S)$ and $h^0(X_j)$,
     there is an isomorphism of motives
     \[ h^2_{tr}(S)\cong h^2_{tr}(X_1)\oplus h^2_{tr}(X_2)\ \ \ \hbox{in}\ \MM_{\rm rat}\ .\]
     
  \end{theorem}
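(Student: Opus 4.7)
The plan is to upgrade the cohomological decomposition of Proposition \ref{Htr} to an isomorphism of Chow motives, from which the statement on $A^2_{hom}$ follows by functoriality.

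\emph{Step 1 (Eigenspace decomposition of $h(S)$).} The commuting involutions $\sigma_1,\sigma_2 \in \aut(S)$ give commuting idempotent correspondences $\tfrac{1}{2}(\Delta_S \pm \Gamma_{\sigma_j}) \in A^2(S \times S)$. Composing these yields four pairwise orthogonal projectors $p^{\varepsilon_1,\varepsilon_2}_S$, $\varepsilon_j \in \{+,-\}$, summing to $\Delta_S$. This produces a decomposition $h(S) = \bigoplus_{\varepsilon_1,\varepsilon_2} h^{\varepsilon_1,\varepsilon_2}(S)$ in $\MM_{\rm rat}$.

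\emph{Step 2 (The ``diagonal'' summands are Tate).} Using the remark following Proposition \ref{quot}, $h^{+,+}(S)$ is identified with $h(\PP^2) = h(S / \langle \sigma_1,\sigma_2\rangle)$, which is a sum of Tate motives; and $h^{-,-}(S)$ is a direct summand of $h(W)$, where $W := S/\langle \sigma_1\sigma_2\rangle$ is the (rational) double cover of $\PP^2$ branched along $L_1 \cup L_2$, hence again a sum of Tate motives. In particular the transcendental component of each of these two summands vanishes, so Theorem \ref{t2} gives
\[ h^{2,tr}(S) \;=\; h^{2,tr}(S)^{+,-} \oplus h^{2,tr}(S)^{-,+}, \]
where $h^{2,tr}(S)^{\varepsilon_1,\varepsilon_2}:=(S,\, p^{\varepsilon_1,\varepsilon_2}_S\circ \pi^{2,tr}_S,\,0)$.

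\emph{Step 3 (Identifying the off-diagonal summands with $h^{2,tr}(X_i)$).} I treat $(+,-)$; the $(-,+)$ case is symmetric. Set $\bar X_1 := S/\sigma_1$, viewed as a projective quotient variety. The projector $\tfrac{1}{2}(\Delta_S + \Gamma_{\sigma_1})$ gives $h(\bar X_1) = h^{+,+}(S) \oplus h^{+,-}(S) = h(\PP^2) \oplus h^{+,-}(S)$; since $h(\PP^2)$ has trivial transcendental part, one obtains $h^{2,tr}(\bar X_1) = h^{2,tr}(S)^{+,-}$. Next, the resolution $g_1 \colon X_1 \to \bar X_1$ contracts a disjoint union of exceptional $(-2)$-curves above the nodes of $\bar X_1$ lying over the transversal intersection points $C \cap L_1$. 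The correspondences $\Gamma_{g_1}$ and ${}^t\Gamma_{g_1}$ (well defined thanks to Proposition \ref{quot}) satisfy $(g_1)_\ast \circ (g_1)^\ast = \ide$ because $g_1$ is birational, and their composition isolates an isomorphism $h^{2,tr}(X_1) \cong h^{2,tr}(\bar X_1)$ (the remaining complement in $h(X_1)$ being a sum of Tate motives contributed by the exceptional $\PP^1$'s). Concatenating gives the desired isomorphism $h^{2,tr}(S) \cong h^{2,tr}(X_1) \oplus h^{2,tr}(X_2)$ in $\MM_{\rm rat}$, for appropriately matched choices of points defining the $h^0$-summands.

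\emph{Step 4 (Consequence for $0$-cycles).} Applying the additive functor $A^2_{hom}(-) = A^2_{AJ}(-)$ (which coincide since $q(S) = q(X_i) = 0$) to the motivic isomorphism, and using the identification $A^2_{AJ}(Y) = (\pi^{2,tr}_Y)_\ast A^2(Y)$ from Theorem \ref{t2}, yields the first claim of the theorem.

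\emph{Main obstacle.} The delicate point is Step 3: one must verify at the level of Chow correspondences (not merely cohomologically) both that $h^{2,tr}(\bar X_1) = h^{2,tr}(S)^{+,-}$ inside $h(S)$ and that the resolution $g_1$ induces an honest motivic isomorphism $h^{2,tr}(X_1) \cong h^{2,tr}(\bar X_1)$. This rests on the full strength of the correspondence formalism on projective quotient varieties (Proposition \ref{quot}) and on a careful bookkeeping of the exceptional $(-2)$-curves so that their contribution is absorbed into the Tate-motivic complement rather than into $h^{2,tr}$.
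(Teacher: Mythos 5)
Your proposal is correct and follows essentially the same route as the paper: the bidouble-cover eigenspace decomposition of $h(S)$ into four summands, the vanishing of the transcendental part of the $(+,+)$ and $(-,-)$ pieces via $\PP^2$ and the rational surface $W$, and the identification of the off-diagonal pieces with $h^2_{tr}(X_i)$ through the quotient and resolution maps (the compatibility issue you flag in Step 3 is exactly the point the paper delegates to the companion paper on Garbagnati surfaces). The deduction of the $A^2_{hom}$ statement from the motivic one is likewise the paper's argument.
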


\begin{proof} This is proven in \cite[Theorem 3.1]{Gsurf} for Garbagnati surfaces (special Horikawa surfaces are birational to Garbagnati surfaces of type G2a in the terminology of loc. cit.). For completeness, we include the argument.

The statement for Chow groups follows from the statement for Chow motives. This last statement is proven 
by exploiting the bidouble cover structure. Let us define motives $h(S)^{\pm \mp}\in\MM_{\rm rat}$ by setting
   \[  \begin{split}
                 h(S)^{++}&:=(S,{1\over 4}(\Delta_S+\Gamma_{\sigma_1})\circ(\Delta_S+\Gamma_{\sigma_2}),0)\ ,\\
                  h(S)^{+-}&:=(S,{1\over 4}(\Delta_S+\Gamma_{\sigma_1})\circ(\Delta_S-\Gamma_{\sigma_2}),0)\ ,\\ 
                  h(S)^{-+}&:=(S,{1\over 4}(\Delta_S-\Gamma_{\sigma_1})\circ(\Delta_S+\Gamma_{\sigma_2}),0)\ ,\\ 
                  h(S)^{--}&:=(S,{1\over 4}(\Delta_S-\Gamma_{\sigma_1})\circ(\Delta_S-\Gamma_{\sigma_2}),0)\ .\\ 
                  \end{split}\]
         (It is readily checked that the given cycles are idempotents and so define motives.)
                  
  This gives a decomposition
    \[ h(S)=h(S)^{++}\oplus h(S)^{+-}\oplus h(S)^{-+}\oplus h(S)^{--}\ \ \ \hbox{in}\ \MM_{\rm rat}\ .\]                
  Defining $h^0(S)$ and $h^4(S)$ by the choice of a zero-cycle invariant under $\sigma_1$ and $\sigma_2$, we get a similar decomposition for $h^2(S)=h(S)-h^0(S)-h^4(S)$. One can check (cf. \cite[Proof of Theorem 3.1]{Gsurf}) that this decomposition is compatible with the decomposition
  $h^2(S)=h^2_{tr}(S)\oplus h^2_{alg}(S)$, and hence there is an induced decomposition
          \[ h^2_{tr}(S)=h^2_{tr}(S)^{++}\oplus h^2_{tr}(S)^{+-}\oplus h^2_{tr}(S)^{-+}\oplus h^2_{tr}(S)^{--}\ \ \ \hbox{in}\ \MM_{\rm rat}\ .\]       
          The first summand $h^2_{tr}(S)^{++}$ is the transcendental part of the motive of $\PP^2$, which is zero. 
          The
          summand $h^2_{tr}(S)^{--}$ corresponds to $h^2_{tr}(W)$ where $W$ is as in the proof of Proposition \ref{Htr}. Since $W$ is a
           rational surface, the summand $h^2_{tr}(S)^{--}$ is zero.          
           The summands $h^2_{tr}(S)^{+-}$ and $h^2_{tr}(S)^{-+}$ are isomorphic to
           $h^2_{tr}(\bar{X}_1)=h^2_{tr}(X_1)$ resp.   $h^2_{tr}(\bar{X}_2)=h^2_{tr}(X_2)$. This proves the theorem.
 \end{proof}

 \begin{corollary}\label{cor1} Let $S$ be a special Horikawa surface, and assume
   \[ \dim H^2_{tr}(S,\QQ)\le 5\ .\]
   Then $S$ has finite-dimensional motive (in the sense of Kimura \cite{Kim}, \cite{An}, \cite{J4}).
   \end{corollary}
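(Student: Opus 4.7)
The plan is to leverage Theorem \ref{main} to reduce finite-dimensionality of the motive of $S$ to that of the two associated K3 surfaces $X_1, X_2$, and then to exploit the hypothesis $\dim H^2_{tr}(S,\QQ) \le 5$ to force $X_1$ and $X_2$ to have small transcendental cohomology, placing them in a range where finite-dimensionality is known.

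First I would split the motive of $S$. Since $S$ is simply-connected (Proposition \ref{prop1}), we have $q(S)=0$ and the odd Chow--K\"unneth components vanish, so $h(S)=h^0(S)\oplus h^2(S)\oplus h^4(S)$. Applying Theorem \ref{t2}, one further decomposes $h^2(S)=h^2_{alg}(S)\oplus h^2_{tr}(S)$. The pieces $h^0(S),h^4(S)$ are Lefschetz motives, hence finite-dimensional (in fact evenly finite-dimensional), and $h^2_{alg}(S)$ is by construction a direct summand of a sum of Lefschetz motives and is therefore also finite-dimensional. Since the class of finite-dimensional motives is stable under direct sum and under taking direct summands, it suffices to prove that $h^2_{tr}(S)$ is finite-dimensional.

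Next I would invoke Theorem \ref{main}, which yields an isomorphism
\[ h^2_{tr}(S)\cong h^2_{tr}(X_1)\oplus h^2_{tr}(X_2)\ \ \hbox{in}\ \MM_{\rm rat}\ . \]
Thus it is enough to prove that each $h^2_{tr}(X_i)$ is finite-dimensional. Taking the realization of this isomorphism gives $\dim H^2_{tr}(X_1,\QQ)+\dim H^2_{tr}(X_2,\QQ)=\dim H^2_{tr}(S,\QQ)\le 5$. Since each $X_i$ is a projective K3 surface, its transcendental lattice has rank at least $2$, so the hypothesis forces $\dim H^2_{tr}(X_i,\QQ)\in\{2,3\}$ for $i=1,2$; equivalently, the Picard number of each $X_i$ is at least $19$.

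Finally, I would appeal to the known finite-dimensionality result for K3 surfaces of Picard number $\ge 19$: such K3 surfaces carry a Shioda--Inose structure and admit a rational dominant map from (a Kummer quotient of) an abelian surface, from which Pedrini deduced that their motive is finite-dimensional. Hence each $h^2_{tr}(X_i)$ is finite-dimensional, and combining the steps above yields the corollary. The one potential obstacle is making sure the Shioda--Inose/Pedrini finite-dimensionality result is indeed available in the range $\rho(X_i)\ge 19$ (and not only for singular K3's with $\rho=20$); this is the only place where the numerical hypothesis $\dim H^2_{tr}(S,\QQ)\le 5$ is actually used, and a citation to Pedrini's theorem on K3 surfaces of large Picard number should suffice.
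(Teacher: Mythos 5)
Your proposal is correct and follows essentially the same route as the paper: bound $\dim H^2_{tr}(X_j,\QQ)\le 3$ for $j=1,2$ using the cohomological splitting of Proposition \ref{Htr} and the fact that a projective K3 has transcendental rank at least $2$, invoke Pedrini's finite-dimensionality theorem for K3 surfaces of Picard number $\ge 19$, and transfer the conclusion to $S$ via the motivic isomorphism of Theorem \ref{main}. The only difference is that you spell out the reduction of $h(S)$ to $h^2_{tr}(S)$ explicitly, which the paper leaves implicit.
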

   
  \begin{proof} Let $X_1, X_2$ be the associated K3 surfaces. Recall (Proposition \ref{Htr}) that there is an isomorphism
     \[ H^2_{tr}(S,\QQ)\cong H^2_{tr}(X_1,\QQ)\oplus H^2_{tr}(X_2,\QQ)\ .\]  
   The $X_j$ being K3 surfaces, the dimension of $H^2_{tr}(X_j,\QQ)$ is at least $2$, and so the assumption on $ H^2_{tr}(S,\QQ)$ implies that
   \[ \dim  H^2_{tr}(X_j,\QQ)\le 3\ \ \ (j=1,2)\ .\]
   It follows from \cite{Ped} that $X_1$ and $X_2$ have finite-dimensional motive. In view of the isomorphism of Theorem \ref{main}, this implies the corollary.
   \end{proof}

 \subsection{The Franchetta property}

 \begin{lemma}\label{smooth} Let $\Ss\to B$ be as in Notation \ref{fam}. The variety $\Ss$ is a smooth quasi-projective variety.
 \end{lemma}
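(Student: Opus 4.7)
The plan is to apply the Jacobian criterion after passing to the affine cover of the weighted projective space. Let $V$ denote the $\C$-vector space of weighted homogeneous polynomials in $x_0,x_1,x_2,x_3$ of degree $10$ (with weights $(1,1,2,5)$) in which $x_0,x_1$ occur only in even degree, so that $B$ is a Zariski-open subset of $V$ (the case $B\subset\PP(V)$ reduces to this). A direct count shows $V$ is spanned by the single monomial $x_3^2$ together with the monomials $x_0^{2a}x_1^{2b}x_2^c$ for $a+b+c=5$.

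Consider the incidence variety
\[
\wt{\Ss}:=\bigl\{(f,x)\in B\times (\C^4\setminus\{0\})\ :\ f(x)=0\bigr\},
\]
which is a hypersurface in the smooth variety $B\times(\C^4\setminus\{0\})$. At a point $(f,x)$, the partial derivative of the defining equation in the direction of the coefficient of a basis monomial $m_i\in V$ equals $m_i(x)$. The cases $x_3\ne 0$, $x_3=0\ne x_2$, and $x_2=x_3=0$ (so $x_0\ne 0$ or $x_1\ne 0$) show that at least one monomial evaluates to a nonzero value on every $x\in\C^4\setminus\{0\}$, using $x_3^2$, $x_2^5$, and $x_0^{10}$ or $x_1^{10}$ respectively. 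By the Jacobian criterion, $\wt{\Ss}$ is smooth.

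Next, $\Ss$ is the quotient of $\wt{\Ss}$ by the natural $\C^\ast$-action on the second factor with weights $(1,1,2,5)$. This action has nontrivial stabilizers only along the strata $x=(0,0,\ast,0)$ (stabilizer $\mu_2$) and $x=(0,0,0,\ast)$ (stabilizer $\mu_5$), corresponding to the two quotient singularities of $\PP$. I would verify that both strata are disjoint from $\wt{\Ss}$: $f(0,0,0,1)=0$ is equivalent to the coefficient of $x_3^2$ in $f$ being zero, and a short local computation in the affine chart $\{x_3\ne 0\}$ around $[0{:}0{:}0{:}1]\in\PP$ shows that the local equation of $S_f$ then vanishes to order $\geq 5$ at the origin, giving a singularity of $S_f$ incompatible with $f\in B$; the analogous argument using the coefficient of $x_2^5$ handles $[0{:}0{:}1{:}0]$. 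Thus $\C^\ast$ acts freely on the smooth variety $\wt{\Ss}$, and the geometric quotient $\Ss=\wt{\Ss}/\C^\ast$ is smooth (and quasi-projective, as a locally closed subvariety of $B\times\PP$).

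The main obstacle is the final verification that smoothness of $S_f$ forces $S_f$ to avoid the two quotient singularities of $\PP$. It reduces to a hands-on local computation in the $\mu_5$- resp.\ $\mu_2$-invariant affine chart, showing that under vanishing of the relevant coefficient the leading jet of the local equation falls into the singular locus, so that $S_f$ is not smooth after all.
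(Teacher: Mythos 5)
Your argument is sound in outline and, despite the different packaging, rests on exactly the same two inputs as the paper's proof: (a) the monomials spanning $V$ have no common zero on $\C^4\setminus\{0\}$ (your Jacobian computation with $x_3^2$, $x_2^5$, $x_0^{10}$, $x_1^{10}$ is precisely the paper's base-point-freeness Claim, transported to the affine cone), and (b) a smooth member $S_b$ avoids the two quotient-singular points of $\PP$. The paper packages these as: the compactified family $\bar{\Ss}\to\PP$ is a projective bundle by (a), hence smooth over the nonsingular locus of $\PP$, and $\Ss$ sits as a Zariski open in the part lying over that locus by (b). You instead prove smoothness of the affine incidence variety $\wt{\Ss}$ and descend along the free $\C^\ast$-action; this is a legitimate and arguably more self-contained alternative, at the cost of having to justify why a free $\C^\ast$-quotient of a smooth variety is smooth (the action admits Zariski-local slices away from the two bad strata, or one can invoke descent of smoothness along the torsor $\wt{\Ss}\to\Ss$).

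The one step that needs more care is your verification of (b) at $[0{:}0{:}0{:}1]$. You correctly observe that if the coefficient of $x_3^2$ vanishes then (since no degree-$10$ monomial $x_3x_0^{2a}x_1^{2b}x_2^{c}$ exists, $5+2a+2b+2c=10$ being impossible) $f$ is independent of $x_3$, and the pullback $T$ of $S_f$ to the smooth $\mu_5$-cover $\A^3$ of the chart $\{x_3\neq 0\}$ has multiplicity $\geq 5$ at the origin, hence is singular there. But ``the cover $T$ is singular'' does not by itself give ``$S_f=T/\mu_5$ is singular'': a singular germ can have a smooth finite quotient (e.g.\ $\{xy=0\}\subset\A^2$ modulo the swap of coordinates is $\A^1$). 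To close this, note that $\mu_5$ acts freely on $T\setminus\{0\}$; if $T/\mu_5$ were a smooth surface germ, its punctured germ would be simply connected, the degree-$5$ \'etale cover $T\setminus\{0\}$ would split into five sheets, and $T$ would be a union of five surface branches in $(\A^3,0)$ meeting pairwise only at the origin --- impossible, since two surfaces through a point of a smooth threefold meet along a curve. The same argument (with $\mu_2$ and multiplicity $\geq 2$) handles $[0{:}0{:}1{:}0]$. To be fair, the paper itself asserts (b) without proof, so this is a repairable gap in a refinement the paper does not even attempt, not a flaw in your overall strategy.
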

 
 \begin{proof} By construction, there are morphisms
   \[ \begin{array}[c]{ccc} 
      \Ss & \xrightarrow{}& \PP\\
     \    \downarrow{\scriptstyle } &&\\
      B&&
      \end{array}\]
   Let $\bar{\Ss}\to\bar{B}$ denote the universal family of all (not necessarily smooth) hypersurfaces in $\PP$ of type
    \[ f_b(x_0,x_1,x_2,x_3)=0\ ,\]
    where $f_b$ is weighted homogeneous of degree $10$  and $x_0, x_1$ only occur in even degrees. Then $\bar{B}$ is a projective space containing $B$ as a Zariski open. We make the following claim:
    
    \begin{claim}\label{bpf} For any $x\in\PP$, there exists $b\in\bar{B}$ such that $x\not\in S_b$.
    \end{claim}
    
   \begin{proof} There is a $(\ZZ/2\ZZ)^2$ cover
     \[ \PP\ \to\  \PP^\prime:=\PP(2,2,2,5)\ .\]
     The surfaces in $\bar{\Ss}\to \bar{B}$ correspond to the complete linear system $\PP H^0(\PP^\prime,\OO_{\PP^\prime}(10))$ which is (ample hence) base point free.
    \end{proof}

    Claim \ref{bpf} ensures that $\bar{\Ss}$ is a projective bundle over $\PP$, in particular it is a projective quotient variety. Any surface $S_b$ with $b\in B$ avoids the two singular points of $\PP$, and so $\Ss$ is Zariski open inside a projective bundle over the non-singular locus of $\PP$.
    It follows that $\Ss$ is smooth. 
       \end{proof}
 
 We are now in position to prove that the universal family of special Horikawa surfaces has the Franchetta property:
 
 \begin{proposition}\label{fran} Let $\Ss\to B$ be as in Notation \ref{fam}. Then for any $b\in B$,
   \[ \ima\Bigl( A^2(\Ss)\to A^2(S_b)\Bigr)=\QQ \]
   injects into cohomology under the cycle class map. 
   
   A generator for $ \ima\bigl( A^2(\Ss)\to A^2(S_b)\bigr)$ is 
     \[ o_{S_b}:={1\over 2}(f_1)^\ast(o_1)={1\over 2}(f_2)^\ast(o_2)\ \ \in\ A^2(S_b)\ ,\] 
     where $f_i\colon S_b\dashrightarrow X_{bi}$ ($i=1,2$) are the rational maps to the associated K3 surfaces $X_{bi}$, and $o_i\in A^2(X_{bi})$ is the distinguished zero-cycle of \cite{BV}.
   \end{proposition}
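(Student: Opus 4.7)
The plan is to exploit the projective bundle structure on the compactification $\bar{\Ss}$ that was exhibited in the proof of Lemma \ref{smooth}. Let $\PP^\circ\subset\PP=\PP(1,1,2,5)$ be the smooth locus (i.e.\ $\PP$ minus its two singular weighted points), and set $\bar{\Ss}^\circ := \bar{\Ss}\cap(\bar{B}\times\PP^\circ)$. By Claim \ref{bpf}, the second projection $p_2\colon\bar{\Ss}^\circ\to\PP^\circ$ is a Zariski-locally trivial projective bundle, and since every fibre $S_b$ with $b\in B$ avoids the singular points of $\PP$, the family $\Ss$ sits as a Zariski open subset of $\bar{\Ss}^\circ$. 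In particular, the localization sequence reduces the problem to controlling the image of $A^2(\bar{\Ss}^\circ)\to A^2(S_b)$.

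I would then apply the projective bundle formula to $p_2$, which is valid with $\QQ$-coefficients because $\PP^\circ$ is a quotient variety (Proposition \ref{quot}). Writing $\xi$ for the relative hyperplane class, $A^*(\bar{\Ss}^\circ)$ is free over $A^*(\PP^\circ)$ on powers of $\xi$; because $\bar{\Ss}^\circ$ is cut out inside $\PP^\circ\times\bar{B}$ by a section that is linear in the $\bar{B}$-factor, a routine identification gives $\xi = p_1^*(H_{\bar{B}})\vert_{\bar{\Ss}^\circ}$. On the base, $A^2(\PP^\circ)_\QQ=\QQ\cdot h^2$ (removing the two singular points of $\PP$ affects only $A^3$), so $A^2(\bar{\Ss}^\circ)$ is spanned by the three classes $p_2^*(h^2)$, $\xi\cdot p_2^*(h)$ and $\xi^2$.

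The decisive step is to restrict each of these three classes to $S_b\hookrightarrow\bar{\Ss}^\circ$, $x\mapsto (b,x)$. The composition with $p_1$ is the constant map to $b$, so $\xi\vert_{S_b}=0$ and the two classes involving $\xi$ vanish upon restriction; the surviving class $p_2^*(h^2)\vert_{S_b}$ equals $H^2\vert_{S_b}$. By the calculation already carried out in the proof of Lemma \ref{relCK}, $H^2\vert_{S_b}=(f_i)^*(o_i)=2\,o_{S_b}$ in $A^2(S_b)$ for both $i=1,2$. This shows simultaneously that $\ima(A^2(\Ss)\to A^2(S_b))=\QQ\cdot o_{S_b}$ and that $\tfrac12(f_1)^*(o_1)=\tfrac12(f_2)^*(o_2)$. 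Injectivity into cohomology is then immediate: since $o_i$ has degree $1$ on the K3 surface $X_{bi}$ and $f_i$ has degree $2$, one has $\deg o_{S_b}=1$, so $\QQ\cdot o_{S_b}$ injects into $H^4(S_b,\QQ)$.

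The only subtlety I foresee is the identification $\xi = p_1^*(H_{\bar{B}})\vert_{\bar{\Ss}^\circ}$: this is the standard description of an incidence variety as a relative hyperplane, but one has to keep track of the $(\ZZ/2\ZZ)^2$-invariance built into $\bar{B}$ in this weighted setting. Once this identification is in place, the vanishing $\xi\vert_{S_b}=0$ is automatic and the rest of the argument is formal.
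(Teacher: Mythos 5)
Your argument is correct and is essentially the paper's own proof: both exploit the projective bundle structure of $\bar{\Ss}$ over the weighted projective space to show that the $\xi$-terms of the projective bundle decomposition die upon restriction to a fibre, reducing the image to $\ima(A^2(\PP)\to A^2(S_b))=\QQ$, and then identify the generator via $H^2\vert_{S_b}$ as in Lemma \ref{relCK}. The only cosmetic differences are that you work over the smooth locus $\PP^\circ$ rather than invoking Proposition \ref{quot} for the quotient variety $\PP$, and that you pin down $\xi$ directly as $q^\ast(H_{\bar B})\vert_{\bar\Ss}$ (a legitimate choice of relative hyperplane class for the incidence bundle), where the paper instead writes $q^\ast(h)=\nu\,\xi+p^\ast(b)$ with $\nu\neq 0$ and solves for $\xi$ -- the two devices accomplish exactly the same cancellation.
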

   
   \begin{proof}
    Given a cycle $\alpha\in A^j(\Ss)$, there exists $\bar{\alpha}\in A^j(\bar{\Ss})$ such that $\alpha$ is the
   restriction of $\bar{\alpha}$. Since $p\colon \bar{\Ss}\to\PP$ is a projective bundle, we can write
     \[  \bar{\alpha}= \sum_k p^\ast(a_k) \xi^{j-k}\ \ \ \hbox{in}\ A^j(\bar{\Ss})\ ,\]
     where $\xi\in A^1(\bar{\Ss})$ is a relatively ample class, and $a_k\in A^k(\PP)$.
     
Let $h\in A^1(\bar{B})$ be a hyperplane section, and let $q\colon
		\bar{\Ss} \to \bar{B}$ denote the projection. We have
					$q^\ast(h)=\nu\, \xi+p^\ast(b)$, for some $\nu\in\QQ$ and $b\in A^1(\PP
		)$. It is readily checked that $\nu$ is non-zero. (Indeed, assume for a moment
		$\nu$ were zero. Then we would have $q^\ast(h^{\dim\bar{B}})=p^\ast(b^{\dim
		\bar{B}})$ in $A^{\dim\bar{B}}(\bar{\Ss})$. But the right-hand side is
		zero, since $\dim\bar{B}>\dim \PP=3$, whereas the left-hand side is non-zero;
		contradiction.)		
				The constant $\nu$ being non-zero, we can write
				\[ \xi= p^\ast(b)+q^\ast(c)\ \ \ \hbox{in}\ A^1(\bar{\Ss})\ ,\]
				where $b\in A^1(\PP)$ and $c\in A^1(\bar{B})$ are non-zero elements.
				The restriction of $q^\ast(c)$ to a fiber $V_b$ is zero,
				and so we find that
				\[  \bar{\alpha}\vert_{S_b}=  a_j^\prime\vert_{S_b}\ \ \
		\hbox{in}\
				A^j(S_b)\ ,\]
				for some $a_j^\prime\in A^j(\PP)$. That is, we have proven equality

   \begin{equation}\label{gendef}  \ima\Bigl( A^j(\Ss)\to A^j(S_b)\Bigr)=\ima\Bigl( A^j(\PP)\to  A^j(S_b)\Bigr)\ .      \end{equation}
   Since $A^2(\PP)$ is one-dimensional, this proves the first statement.
   
   For the second statement, we recall that there is a degree 4 morphism $q\colon S_b\to \PP^2$. Letting $h\in A^1(\PP^2)$ denote the hyperplane class, the zero-cycle 
     \[ o_{S_b}:={1\over 4}q^\ast(h^2)\ \ \in\ A^2(S_b)\] 
is in the image of the specialization map $ \ima\bigl( A^2(\Ss)\to A^2(S_b)\bigr)$, and has degree 1. Since $o_{S_b}$ is the unique degree 1 zero-cycle invariant under both involutions $\sigma_i$, the other descriptions follow.
    \end{proof}
 
 \begin{proposition}\label{rho1} The very general special Horikawa surface has Picard number 1.
  \end{proposition}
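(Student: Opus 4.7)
The plan is to reduce the assertion, via the Hodge-theoretic decomposition of Proposition \ref{Htr}, to computing the Picard ranks of the associated K3 surfaces $X_1,X_2$. Noether's formula for $S$ yields $c_2(S)=12\chi(\OO_S)-K_S^2=35$, and combined with $q(S)=0$ this gives $b_2(S)=33$. Feeding this into $H^2_{tr}(S,\QQ)\cong H^2_{tr}(X_1,\QQ)\oplus H^2_{tr}(X_2,\QQ)$ produces
\[ \rho(S)\;=\;33-\dim H^2_{tr}(X_1,\QQ)-\dim H^2_{tr}(X_2,\QQ)\ ,\]
so the problem reduces to computing the generic Picard number of the K3 surfaces $X_j$.

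Next, I would establish the uniform lower bound $\rho(X_j)\geq 6$ for every $b\in B$. The singular K3 surface $\bar X_j$ carries the pullback $\bar f_j^\ast h$ of a hyperplane class of $\PP^2$, and it acquires five $A_1$-singularities at the five transversal intersection points of $C\cap L_j$; resolving these in $g_j\colon X_j\to \bar X_j$ produces five mutually orthogonal $(-2)$-curves, giving six linearly independent classes in $NS(X_j)$. Hence $\dim H^2_{tr}(X_j,\QQ)\leq 16$ for every $b\in B$.

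The third and crucial step is the matching upper bound: for very general $b\in B$, I claim $\rho(X_j)=6$ exactly. This I would prove by a dimension count. The parameter space of pairs $(C,L_j)$ modulo the $PGL_3$-action has dimension $20+2-8=14$, which agrees with the moduli dimension $20-6=14$ of K3 surfaces whose N\'eron--Severi lattice contains the rank-$6$ lattice generated by the polarization and the five exceptional curves. Infinitesimal Torelli for K3 surfaces then implies that the associated period map is dominant onto the relevant Noether--Lefschetz locus (its generic fibers being finite by the essential uniqueness of the polarizing class $\bar f_j^\ast h$ on the very general $X_j$), so for very general $b$ the Picard number of $X_j$ equals $6$. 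Substituting into the formula above gives $\rho(S)=33-16-16=1$.

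The main obstacle is the dominance statement in the third step: one must rule out the possibility that the K3 surfaces produced by our construction happen to sit in a proper Noether--Lefschetz sublocus of their moduli space. If this turns out to be delicate, a softer alternative is to exhibit a single triple $(C,L_1,L_2)$ (for instance via a toric degeneration or an explicit lattice computation) for which both $X_1$ and $X_2$ are verified to have Picard rank $6$, and then conclude for the very general member by upper-semicontinuity of the Picard rank in smooth families.
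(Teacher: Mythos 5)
Your main route stalls exactly where you flag it: the coincidence of dimensions $20+2-8=14=20-6$ does not by itself give dominance onto the rank-$6$ Noether--Lefschetz locus. What is actually needed is generic finiteness of the classifying map from the space of pairs $(C,L_j)$ modulo $PGL_3$ to the moduli of lattice-polarized K3 surfaces --- i.e.\ that a very general $X_j$ in the image determines the sextic $C+L_j$ up to projectivities, which requires recovering the degree-$2$ class inducing the double cover up to $\aut(X_j)$ (a Sterk-type finiteness statement, not a consequence of infinitesimal Torelli) --- together with the observation that a $14$-dimensional irreducible family contained in the countable union of $14$-dimensional NL components must dominate one of them. None of this is supplied, so as written the third step is an assertion, not a proof. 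A smaller point: Picard rank is not Zariski upper-semicontinuous in families; the correct statement underlying your fallback is that $NS$ of the very general fibre injects (by specialization/parallel transport) into $NS$ of every fibre, so the very general Picard number is the minimum over the family.

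Your ``softer alternative'' is in substance what the paper does, but applied to $S$ directly rather than to the associated K3 surfaces: it exhibits the single member $x_3^2=x_0^{10}+x_1^{10}+x_2^5$ in $\PP(1,1,2,5)$, whose Picard number is $1$ by Moonen's computation for weighted Fermat-type surfaces \cite[Proposition 4]{Moo}, and concludes by the specialization argument above. This bypasses the K3 decomposition, the lattice of exceptional $(-2)$-curves, and the period map altogether. Your preliminary reductions are correct and consistent with this ($b_2(S)=33$, $\rho(S)=33-\dim H^2_{tr}(X_1,\QQ)-\dim H^2_{tr}(X_2,\QQ)$, and $\rho(X_j)\ge 6$ from the polarization plus the five disjoint $(-2)$-curves over $C\cap L_j$, whose Gram matrix $\mathrm{diag}(2,-2,-2,-2,-2,-2)$ is nondegenerate), but to close the argument along your lines you would still need an explicit triple with both $\rho(X_1)=\rho(X_2)=6$ verified, which is no easier than computing $\rho(S)=1$ for the Fermat-type member once.
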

  
  \begin{proof} (NB: this is observed in \cite[Remark 1.10]{PZ}.)
  The special Horikawa surface defined by the equation
  \[x_3^2=x_0^{10}+x_1^{10}+x_2^5\] 
  in $\PP(1,1,2,5)$ has Picard number $1$, in view of \cite[Proposition 4]{Moo}.
    \end{proof}
  
\subsection{Establishing an MCK}

  \begin{theorem}\label{th:mck} Let $S$ be a special Horikawa surface. Then $S$ has an MCK decomposition. In particular, the image of the intersection product map
  \[ A^1(S)\otimes A^1(S)\ \to\ A^2(S) \]
  is of dimension $1$, generated by $c_2(S)$.
  \end{theorem}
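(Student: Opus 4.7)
The plan is to apply Proposition \ref{equiv}: we take $o_S := \frac{1}{2}(f_1)^*(o_1) \in A^2(S)$ (the distinguished degree-$1$ zero-cycle identified in Proposition \ref{fran}) and aim to show that the modified small diagonal $\Gamma_3(S, o_S)$ vanishes in $A^4(S^3)$. The proof will follow the ``spread'' strategy: realise $\Gamma_3(S_b, o_{S_b})$ as a fiberwise restriction of a generically defined cycle on the universal family $\Ss \to B$, verify its cohomological triviality on a fiber, and conclude via a Franchetta-type statement for the triple relative self-product $\Ss^3_B := \Ss \times_B \Ss \times_B \Ss$.

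The first and most technical step is to establish the Franchetta property for $\Ss^3_B$: for every $b \in B$, the restriction $\ima\bigl(A^*(\Ss^3_B) \to A^*(S_b^3)\bigr)$ injects into $H^*(S_b^3, \QQ)$. The approach extends the argument of Proposition \ref{fran}. Viewing $\bar{\Ss}^3_{\bar B}$ inside $\PP^3 \times \bar B$, the fiber over a triple $(x_1, x_2, x_3) \in \PP^3$ is the linear subspace of $\bar B$ cut out by the three conditions $f_b(x_i) = 0$. This realises $\bar{\Ss}^3_{\bar B}$ as a stratified linear fibration over $\PP^3$, with strata indexed by the rank of the three linear conditions. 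On each stratum the fibration becomes a projective bundle (using the base-point-freeness argument of Claim \ref{bpf}), so Fulton's projective bundle formula applies on each piece, together with Proposition \ref{quot} to handle the (quotient) singularities of $\PP$. Piecing the strata together shows that any class on $\Ss^3_B$ restricts, on a fiber $S_b^3$, to a class in the image of $A^*(\PP^3) \to A^*(S_b^3)$; this image is spanned by products of pullbacks of the hyperplane class from $\PP^2$ via $q\colon S_b \to \PP^2$ together with partial diagonals, and a direct computation confirms that this finite-dimensional subspace injects into cohomology.

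The second step is the cohomological vanishing $[\Gamma_3(S_b, o_{S_b})] = 0$ in $H^8(S_b^3, \QQ)$. By the Künneth decomposition and the fact that $o_{S_b}$ has degree $1$, the class of $\Gamma_3$ only survives on Künneth components lying in $H^2_{tr}(S_b)^{\otimes 3}$. Theorem \ref{main} (more precisely, its motivic upgrade) gives an orthogonal decomposition $H^2_{tr}(S_b) \cong H^2_{tr}(X_1) \oplus H^2_{tr}(X_2)$; the ``diagonal'' summands $H^2_{tr}(X_i)^{\otimes 3}$ contribute zero by Beauville--Voisin applied to each K3 surface $X_i$ (together with the fact that $o_{S_b}$ restricts, on each K3 quotient, to a multiple of the Beauville--Voisin class $o_i$), and the ``mixed'' summands vanish by orthogonality since they involve pairings between $H^2_{tr}(X_1)$ and $H^2_{tr}(X_2)$ which are zero. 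Combining Steps 1 and 2, the cycle $\Gamma_3(S_b, o_{S_b})$ is generically defined (the operations forming it are all defined relatively) and cohomologically trivial on the fiber, hence rationally trivial; Proposition \ref{equiv} then yields the MCK decomposition.

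The second assertion of the theorem follows formally from the MCK structure. The bigrading $A^*(S) = \bigoplus_j A^*_{(j)}(S)$ associated to the MCK satisfies $A^1(S) = A^1_{(0)}(S)$ (using $q(S) = 0$, so there is no Abel--Jacobi part for divisors), whence $\ima(A^1(S) \otimes A^1(S) \to A^2(S)) \subset A^2_{(0)}(S) = (\pi^4_S)_* A^2(S) = \QQ \cdot o_S$. The Chern class $c_2(S)$ lies in $A^2_{(0)}(S)$ as well, and has nonzero degree since the topological Euler characteristic of $S$ is nonzero, so $c_2(S)$ generates this line. The main obstacle in the whole scheme is the triple Franchetta property of Step 1: the stratification of $\PP^3$ needed for the projective bundle computations is delicate along the small and partial diagonals, but the necessary technology is now standard in this circle of ideas.
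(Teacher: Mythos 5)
Your overall frame (apply Proposition \ref{equiv} to $o_S=\tfrac12 (f_1)^\ast(o_1)$ and kill the modified small diagonal $\Gamma_3(S,o_S)$) is the right one, and your treatment of the final assertion about $A^1\cdot A^1$ and $c_2(S)$ is essentially fine. But the engine of your argument --- the Franchetta property for the triple fibre product $\Ss\times_B\Ss\times_B\Ss$ --- is not established by what you write, and as described it begs the question. The stratified projective-bundle argument does \emph{not} show that a class of $\Ss^3_B$ restricts on $S_b^3$ to a class coming from $A^\ast(\PP^3)$: the generically defined subspace of $A^4(S_b^3)$ contains the small diagonal $\Delta^{sm}_{S_b}$, the classes $p_{ij}^\ast(\Delta_{S_b})\cdot p_k^\ast(o_{S_b})$, and all their twists by the $(\ZZ/2\ZZ)^2$-action, none of which come from $\PP^3$ (the Künneth components of $[\Delta^{sm}]$ involve all of $H^2$, not just the hyperplane class). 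The ``direct computation'' you invoke to check that this enlarged subspace injects into cohomology in codimension $4$ would have to show, among other things, that the homologically trivial combination $\Gamma_3(S_b,o_{S_b})$ of these generators vanishes in $A^4(S_b^3)$ --- which is precisely the theorem. So Step 1 is not a technical preliminary; it \emph{is} the statement to be proved, and no mechanism for proving it is supplied. (Such a triple Franchetta property, if true, would have to be deduced from the Beauville--Voisin relation on the associated K3 surfaces; your sketch never feeds that input into Step 1.)

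A second, related confusion: in Step 2 you locate the cohomological obstruction in $H^2_{tr}(S_b)^{\otimes 3}$, but that space sits in $H^6(S_b^3)$ while $\Gamma_3$ lives in $H^8(S_b^3)$; in fact $[\Gamma_3(S,o_S)]=0$ holds automatically for \emph{any} regular surface with a degree-one cycle $o_S$, by a routine Künneth computation, with no K3 input whatsoever. This confirms that all the content of the theorem is concentrated in the Chow-theoretic step you have left unproved. For comparison, the paper's proof never uses a Franchetta property for $\Ss^3_B$: after reducing (by spreading out) to Picard number one, it decomposes $h(S)$ into eigen-submotives $h(S)^{\pm\mp}$ for the bidouble cover, shows $h(S)^{--}=0$ and $h(S)^{++}\cong h(\PP^2)$, and then expands $\Gamma_3$ by inclusion--exclusion over the three remaining projectors; each block is pushed forward to $\bar X_1$, $\bar X_2$ or $\PP^2$, where the vanishing is exactly the Beauville--Voisin relation (resp.\ the MCK for $\PP^2$), and the residual mixed terms die by $\sigma_i$-invariance of $\Delta^{sm}_S$ and $o_S$. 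That is where the K3 input enters, and it is the step your proposal is missing.
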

  
  \begin{proof} This proof is inspired by \cite[Proposition 6.1]{FLV}. Let $\Ss\to B$ be the universal family as in Notation \ref{fam}, and let $\pi^j_{\Ss}$ be the relative CK decomposition as in Lemma \ref{relCK}. For any fiber $S:=S_b$ with $b\in B$, the fiberwise restriction $\pi^j_{S}:=\pi^j_{\Ss}\vert_{S\times S}$ defines a CK decomposition; this corresponds to choosing
    \[ o_{S}={1\over 2}(f_i)^\ast(o_i)\ \ \ \in\ A^2(S)\]
 as the distinguished zero-cycle (where $f_i\colon S\dashrightarrow X_i$ ($i=1,2$) are, as before, the rational maps to the associated K3 surfaces). We will show that this CK decomposition is MCK. By a standard spread argument (cf. \cite[Lemma 3.2]{Vo}), it suffices to prove this for $b\in B$ very general, and so we may suppose that $S$ has Picard number $1$ (Proposition \ref{rho1}).
  
  Let us consider the decomposition of motives
    \begin{equation}\label{decompi} h(S)= h(S)^{++}\oplus h(S)^{+-}\oplus h(S)^{-+}\oplus h(S)^{--}\ \ \hbox{in}\ \MM_{\rm rat} \end{equation}
   as in the proof of Theorem \ref{main}.
   By the choice of the zero-cycle $o_S$ defining $\pi^0_{S}$ and $\pi^4_{S}$, we have that $h^0(S)$ and $h^4(S)$ are submotives of
   $h(S)^{++}$. Since $\pic(S)=\QQ$ we moreover have that $h^2_{alg}(S)\subset h(S)^{++}$, and so
    \[  h(S)^{++}= h^0(S)\oplus h^2_{alg}(S)\oplus h^4(S)\cong h(\PP^2)\ \ \ \hbox{in}\ \MM_{\rm rat}\ .\]
    It follows that there is equality
     \[  h(S)^{+-}\oplus h(S)^{-+}\oplus h(S)^{--}= h^2_{tr}(S)\ \ \hbox{in}\ \MM_{\rm rat}\ . \]  
     But then, in view of Theorem \ref{main}, the motive $h(S)^{--}$ must be $0$. The decomposition (\ref{decompi}) thus boils down to
     \begin{equation}\label{decom} \begin{split}   h(S)&= h(S)^{++}\oplus h^2_{}(S)^{+-}\oplus h^2_{}(S)^{-+}\\
                                                                                    &=: (S,\Delta^{++},0)\oplus (S,\Delta^{+-},0)\oplus (S,\Delta^{-+},0)  \ \ \ \hbox{in}\ \MM_{\rm rat} \ .\\
                                                                                    \end{split}\end{equation}
     
 In view of Proposition \ref{equiv}, to show that $\{\pi^j_{S}\}$ is MCK it suffices to prove vanishing of the modified small diagonal
      \[\begin{split} \Gamma_3:=\Gamma_{3}(S,
			o_S):=&\Delta^{sm}_{S}-p_{12}^{*}(\Delta_{S})p_{3}^{*}(o_S)-p_{23}^{*}(\Delta_{S})p_{1}^{*}(o_S)-p_{13}^{*}(\Delta_{S})p_{2}^{*}(o_S)\\
			&+p_{1}^{*}(o_S)p_{2}^{*}(o_S)+p_{1}^{*}(o_S)p_{3}^{*}(o_S)+p_{2}^{*}(o_S)p_{3}^{*}(o_S)  \ \ \ \ \hbox{in}\ A^4(S^3)\ .\\
			\end{split}\]  
 
  Using the decomposition \eqref{decom}, we see that there is equality
    \[  \Gamma_3= \sum_{ A,B,C\in \{ \Delta^{++},\Delta^{+-},\Delta^{-+}\} }       (A\times B\times C)_\ast (\Gamma_3)\ \ \ \hbox{in}\ A^4(S^3)\ .\] 
    Rewriting this using the inclusion-exclusion principle, we find
     \begin{equation}\label{inout} \begin{split} \Gamma_3                                              &= \sum_{ A,B,C\in \{ \Delta^{++},\Delta^{+-}\} }       (A\times B\times C)_\ast (\Gamma_3) +     \sum_{ A,B,C\in \{ \Delta^{++},\Delta^{-+}\} }  (A\times B\times C)_\ast (\Gamma_3)\\
                                                &\ \ \   - (\Delta^{++}  \times\Delta^{++}\times\Delta^{++})_\ast (\Gamma_3) + \sum_{Rest}    (A\times B\times C)_\ast (\Gamma_3)\ \ \ \  \hbox{in}\ A^4(S^3)\ ,\\
                                                \end{split}\end{equation}
                                                where $\displaystyle\sum_{Rest}$ is the sum over all combinations $A,B,C\in\{\Delta^{+-},\Delta^{-+}\}$ such that not all three are the same.
   Let us ascertain that each of the four summands in \eqref{inout} is zero:
   
   \begin{itemize}
   
   \item For the first summand, we note that $\Delta^{++}+\Delta^{+-}$ is a projector on $\ima\bigl( A^\ast(\bar{X}_1)\to A^\ast(S)\bigr)$, and so
     \[  \begin{split} \sum_{ A,B,C\in \{ \Delta^{++},\Delta^{+-}\} }       (A\times B\times C)_\ast (\Gamma_3) &={1\over 8} (\bar{f}_1\times\bar{f}_1\times \bar{f}_1)^\ast (\bar{f}_1\times \bar{f}_1\times \bar{f}_1)_\ast (\Gamma_3)\\   
     &=  {1\over 4} (\bar{f}_1\times\bar{f}_1\times\bar{f}_1)^\ast  \Gamma_3(\bar{X}_1, (\bar{f}_1)_\ast(o_S))\\   
     &=  {1\over 8} (\bar{f}_1\times \bar{f}_1\times \bar{f}_1)^\ast (g_1\times g_1\times g_1)_\ast   \Gamma_3(X_1, o_1))\\     
     &=0\ .\\
     \end{split}\]
     (Here, $\bar{f}_1\colon S\to \bar{X}_1$ and $g_1\colon X_1\to\bar{X}_1$ are as in the proof of Proposition \ref{Htr}.  The last equality expresses the fact that the distinguished zero-cycle $o_1$ defines an MCK decomposition for the K3 surface $X_1$.
    In the other equalities, we have twice used that for any morphism of degree $d$ between surfaces $f\colon S\to X$ and any $a\in A^2(S)$, one has
           \[    (f\times f\times f)_\ast   \Gamma_3(S,a)  =d\, \Gamma_3(X, f_\ast(a))\ \ \ \hbox{in}\ A^4(X^3)\ .)\]
         
     \item For the second summand, the argument is the same, replacing $X_1$ by $X_2$.
     
     \item As for the third summand, $\Delta^{++}$ is a projector on $\ima(A^\ast(\PP^2)\to A^\ast(S))$, and so this summand vanishes by the same argument, replacing $X_i$ by $\PP^2$ and using that $\PP^2$ has an MCK decomposition.
     
     \item Finally, the last summand consists of terms
       \[      (A\times B\times C)_\ast (\Gamma_3)\ ,\ \ \    \   A,B,C\in \{\Delta^{+-},\Delta^{-+}\}\ .\]
     Looking at the definition of $\Gamma_3$, it is direcly checked that any term of this type vanishes, because 
         \[    (A\times B\times C)_\ast (\Delta^{sm}_S)=0\ \ \ \hbox{in}\ A^4(S^3)\ ,\ \ \    \   A,B,C\in \{\Delta^{+-},\Delta^{-+}\}\ \]       
         (indeed, one has equality $(\Delta^{+-}\times \Delta^{+-}\times \Delta_S)_\ast(\Delta^{sm}_S)= (\Delta^{+-}\times \Delta^{+-}\times \Delta^{++})_\ast(\Delta^{sm}_S)$, which  
         expresses the fact that $\Delta^{sm}_S$ is invariant under both involutions $\sigma_i$), and
         \[ (\Delta^{+-})_\ast (o_S)= (\Delta^{-+})_\ast (o_S)=0\ \ \ \hbox{in}\ A^2(S) \]
         (which expresses the fact that $o_S$ is invariant under both involutions $\sigma_i$).
              \end{itemize}

  This proves that the CK decomposition is MCK.
        To finish the proof of the theorem, it only remains to see that $c_2(S)\in A^2(S)$ is in $A^2_{(0)}(S)$, i.e. we need to check that 
        \[ (\pi^j_S)_\ast c_2(S)=0\ \ \ \hbox{in}\ A^2(S)\ \ \forall j\not=4\ .\]
        This follows from the Franchetta property (Proposition \ref{fran}), since we can write
        \[   (\pi^j_S)_\ast c_2(S)=    \Bigl( (\pi^j_{\Ss})_\ast c_2(T_{\Ss/B})\Bigr){}\vert_{S_b}  \ \ \ \hbox{in}\ A^2(S)\  .\]
        (Another way to prove that $c_2(S)$ is proportional to the distinguished zero-cycle on $S$ is by using the ``modified small diagonal relation'', as is done in 
        \cite{BV} for K3 surfaces.)
        Then, $c_2(S)\in   A^2_{(0)}(S)\cong\QQ$ is a generator, because
        $ \deg c_2(S)=    \chi_{top}(S)=   35 $.  
        The theorem is now proven. 
          \end{proof}

  \begin{corollary}\label{cor5} Let $S$ be a special Horikawa surface, and $m\in\NN$. Let $R^\ast(S^m)\subset A^\ast(S^m)$ be the $\QQ$-subalgebra
    \[ R^\ast(S^m):=\langle p_j^\ast A^1(S), (p_{ij})^\ast \Delta_S \rangle\ \ \ \subset\ A^\ast(S^m) \]
    (here, $p_j$ and $p_{ij}$ denote projections from $S^m$ to $S$ resp. to $S\times S$). 
    
    Then $R^j(S^m)$ injects into cohomology under the cycle class map for $j\ge 2m-1$.
    \end{corollary}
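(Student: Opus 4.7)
The plan is to leverage the MCK structure (Theorem \ref{th:mck}) and the modified small diagonal relation $\Gamma_3(S,o_S)=0$ it provides (via Proposition \ref{equiv}) to show that in codimension $j \geq 2m-1$, any element of $R^j(S^m)$ can be rewritten to lie in the ``weight-zero'' piece $A^j_{(0)}(S^m)$ of the MCK-induced bigrading on the $m$-fold product. Since $A^\ast_{(0)}(S^m) = \bigotimes_j A^\ast_{(0)}(S)$ and $A^\ast_{(0)}(S) = \QQ \oplus \pic(S) \oplus \QQ \cdot o_S$ (where the equality in degree $2$ uses Theorem \ref{th:mck} together with Proposition \ref{fran}) injects into $H^\ast(S,\QQ)$, this yields the desired injectivity.

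The generators of $R^\ast(S^m)$ are analyzed as follows. First, since $H^1(S,\QQ)=0$ (as $q(S)=0$), every divisor $p_j^\ast(h)$ lies in $A^1_{(0)}(S^m)$. Second, the diagonal $\Delta_S \in A^2(S\times S)$ splits as $\Delta_S = \pi^0_S + \pi^2_S + \pi^4_S$, with $\pi^0_S + \pi^4_S \in A^2_{(0)}(S\times S)$; the ``remainder'' $\pi^2_S$ lies in the higher-weight piece of the bigrading. The relation $\Gamma_3(S,o_S)=0$ then encodes precisely the statement that products of $\pi^2_S$-factors indexed along a three-element cycle can be rewritten using the other, weight-zero, summands.

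Iterating this reduction, one shows combinatorially that in codimension $j \geq 2m-1$ --- where there are no longer enough ``free indices'' for transcendental factors to persist --- every element of $R^j(S^m)$ reduces to an element of $A^j_{(0)}(S^m)$. The combinatorial step is the main obstacle and follows the argument familiar from the K3 case, e.g.\ as in \cite[\S 2]{V6} and \cite[\S 6]{FLV}; that argument carries over verbatim since the modified small diagonal relation is the only input it uses, and we have it for $S$ by Theorem \ref{th:mck}. The Franchetta property (Proposition \ref{fran}) is used to verify that $A^2_{(0)}(S) = \QQ \cdot o_S$, i.e.\ that the only weight-zero zero-cycle class coming from the universal family is the one distinguished by $o_S$; this is what allows one to identify the weight-zero piece of the tensor product with a piece of cohomology.
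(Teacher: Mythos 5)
Your overall strategy (reduce to the weight-zero piece $A^\ast_{(0)}$ of the MCK bigrading and then invoke injectivity into cohomology) is the right one, and it is the one the paper uses, but two of your structural claims are false and they sit exactly where the content of the argument lies. First, $A^\ast_{(0)}(S^m)$ is \emph{not} $\bigotimes_j A^\ast_{(0)}(S)$: for instance $\Delta_S$ lies in $A^2_{(0)}(S\times S)$ (this holds for any self-dual MCK decomposition, \cite[Lemma 1.4]{SV2}) but is not in the image of the exterior product $A^\ast_{(0)}(S)\otimes A^\ast_{(0)}(S)\to A^\ast(S\times S)$, since its transcendental K\"unneth component is not a sum of products of algebraic classes. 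Consequently your claim that the weight-zero piece of $S^m$ ``injects into cohomology because it is a tensor product of pieces that do'' does not establish the key input $A^i_{(0)}(S^m)\hookrightarrow H^{2i}(S^m)$ for $i\ge 2m-1$; that injectivity is a genuine theorem (noted in the introduction of \cite{V6}, cf. \cite[Proof of Lemma 2.20]{acs}) and is precisely where the bound $2m-1$ enters. Second, your assertion that $\pi^2_S$ lies in a ``higher-weight'' piece is wrong for the same reason: one computes $(\pi^a_S\times\pi^b_S)_\ast\pi^2_S=\pi^b_S\circ\pi^2_S\circ\pi^{4-a}_S$, which for $a+b=4$ is nonzero only when $a=b=2$, so $\pi^2_S\in A^2_{(0)}(S\times S)$ along with the full diagonal.

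Once these points are corrected, the proof collapses to the paper's two-line argument: $S^m$ carries the product MCK decomposition \cite[Theorem 8.6]{SV}, the generators $p_j^\ast A^1(S)$ and $(p_{ij})^\ast\Delta_S$ all lie in $A^\ast_{(0)}(S^m)$ (using $\pi^1_S=0$, \cite[Lemma 1.4]{SV2} and the fact that pullback along the projections respects the bigrading \cite[Corollary 1.6]{SV2}), hence $R^\ast(S^m)\subset A^\ast_{(0)}(S^m)$ \emph{in every degree} --- no combinatorial reduction is needed for this containment --- and one concludes by the cited injectivity in codimension $\ge 2m-1$. Your proposed combinatorial rewriting via $\Gamma_3(S,o_S)=0$, in the style of Voisin's original K3 argument, could in principle substitute for the cited injectivity, but as written you defer that entire step to references while misplacing where the codimension bound is used, and you rest it on the incorrect identification of $A^\ast_{(0)}(S^m)$ above; so the proposal as it stands does not constitute a proof. (A minor further point: $A^2_{(0)}(S)=(\pi^4_S)_\ast A^2(S)=\QQ\, o_S$ is immediate from the definition of the bigrading; the Franchetta property is not needed for that, but rather to show $c_2(S)\in A^2_{(0)}(S)$ in Theorem \ref{th:mck}.)
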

    
    \begin{proof} This is an immediate consequence of the MCK package. Indeed, the product $S^m$ has an MCK decomposition \cite[Theorem 8.6]{SV}.  
 One has $A^1(S)=A^1_{(0)}(S)$ (because $\pi^1_S=0$) and $\Delta_S\in A^2_{(0)}(S\times S)$ (this is true for any MCK decomposition, cf. \cite[Lemma 1.4]{SV2}). The projections $p_j$ and $p_{ij}$ respect the grading \cite[Corollary 1.6]{SV2}, and so $R^\ast(S^m)\subset A^\ast_{(0)}(S^m)$.
The corollary now follows from the fact that for any surface $S$ with an MCK decomposition, and any $m\in\NN$, the cycle class map induces injections
  \[ A^i_{(0)}(S^m)\ \hookrightarrow\ H^{2i}(S^m)\ \ \ \forall i\ge 2m-1\ \]
  (this is noted in \cite[Introduction]{V6}, cf. also \cite[Proof of Lemma 2.20]{acs}).        
    \end{proof}

  \begin{remark} Conjecturally, $R^j(S^m)$ should inject into cohomology for all $j$. This is similar to a conjecture made by Voisin for K3 surfaces
  \cite[Conjecture 1.6]{Voi08}.
    \end{remark}

\vskip1cm
\begin{nonumberingt} Thanks to the referee for suggesting the proof of Theorem \ref{th:mck}; this is far more elegant than my original argument.
Thanks to Kai and Len for countless enjoyable "Papa Lunches".
\end{nonumberingt}

\vskip1cm

\end{document}